\DeclarePairedDelimiterX\abs[1]\lvert\rvert{
	\ifblank{#1}{\:\cdot\:}{#1}
}
\DeclarePairedDelimiterX\norm[1]\lVert\rVert{ 
	\ifblank{#1}{\:\cdot\:}{#1}
}
\DeclarePairedDelimiterX{\inner}[2]{\langle}{\rangle}{ 
	\ifblank{#1}{\:\cdot\:}{#1},\ifblank{#2}{\:\cdot\:}{#2}
}
\providecommand\given{}
\DeclarePairedDelimiterX\Set[1]{\lbrace}{\rbrace}{
	\renewcommand\given{\SetSymbol[\delimsize]}
	#1
}
\DeclarePairedDelimiterX\condE[1]{\mathbb{E}\lbrack}{\rbrack}{
	\renewcommand\given{\SetSymbol[\delimsize]}
	#1
}
\DeclarePairedDelimiterX\condVar[1]{\text{Var}(}{)}{
	\renewcommand\given{\SetSymbol[\delimsize]}
	#1
}
\DeclarePairedDelimiterXPP\Prob[1]{\mathbb{P}}(){}{
	\renewcommand\given{\nonscript\:\delimsize\vert\nonscript\:
		\mathopen{}}
\DeclarePairedDelimiterXPP\Var[1]{\text{Var}}(){}{
	\renewcommand\given{\nonscript\:\delimsize\vert\nonscript\:
		\mathopen{}}
\DeclarePairedDelimiterXPP\Mean[1]{\mathbb{E}}[]{}{
	\renewcommand\given{\nonscript\:\delimsize\vert\nonscript\:
		\mathopen{}}
	#1}
\DeclarePairedDelimiterX\open[2]
\DeclarePairedDelimiterX\lopen[2]
\DeclarePairedDelimiterX\ropen[2]
\DeclarePairedDelimiterX\closed[2]
\theoremstyle{plain}
\newtheorem{theorem}{Theorem}
\newtheorem{corollary}[theorem]{Corollary}
\newtheorem{proposition}[theorem]{Proposition}
\newtheorem{lemma}[theorem]{Lemma}
\newcommand\R{\mathbb{R}}
\DeclarePairedDelimiterX\lrangle[1]\langle\rangle{
	\ifblank{#1}{\:\cdot\:}{#1}
}
\newcommand\dd{\mathrm{d}} 
\newcommand\ff{\mathcal{F}} 
\newcommand\ee{\mathbb{E}} 
\newcommand\pp{\mathbb{P}}
\newcommand\convd{\overset{\sim}{\longrightarrow}} 
\newcommand{\n}{{(n)}}
\newcommand{\p}{\mathbb P}
\newcommand{\e}{\mathbb E}
\renewcommand{\a}{\alpha}
\newcommand{\stably}{\overset{st}{\rightarrow}}
\renewcommand{\convd}{\overset{d}{\rightarrow}}
\newcommand{\cip}{\overset{\p}{\rightarrow}}
\newcommand{\oh}{\mathrm{o}}
\newcommand{\ind}[1]{\mbox{\rm\large 1}_{\{#1\}}}
\title[Discretization of the Lamperti representation]{Discretization of the Lamperti representation of a positive self-similar Markov process}
\author[J.\ Ivanovs and J.\ D.\ Th{\o}stesen]{Jevgenijs Ivanovs and Jakob D.\ Thøstesen}
\begin{document}
\begin{abstract}
This paper considers discretization of the L\'evy process appearing in the Lamperti representation of a strictly positive self-similar Markov process.
Limit theorems for the resulting approximation are established under some regularity assumptions on the given L\'evy process.
Additionally, the scaling limit of a positive self-similar Markov process at small times is provided.
Finally, we present an application to simulation of self-similar L\'evy processes conditioned to stay positive.
\end{abstract}
\keywords{Exponential functional; Lamperti representation; positive self-similar Markov process; small time behavior; stable L\'evy process conditioned to stay positive}
\subjclass[2010]{60G18, 60G51, 60F17} 
\maketitle

\section{Introduction}
Positive self-similar Markov processes (pssMp) have received a lot of attention in recent years, see~\cite{bardoux16,pssMp_fluct} and a survey~\cite{rivero_survey}. One class of examples is given by self-similar L\'evy processes `conditioned' to stay positive, which arise in various limit theorems concerned with extremes, first passage times and Skorokhod reflection~\cite{asmussen_ivanovs, ivanovs2018, iva_podolskij}.
Recall that $X=(X_t)_{t\geq 0}$ is a pssMp if it is a positive strong Markov process with the self-similarity property:
$(X_{tc})_{t\geq 0}$ with $X_0=x>0$ has the law of $(c^{1/\alpha} X_t)_{t\geq 0}$ with $X_0=c^{-1/\a}x$ for any $c>0$, where $1/\alpha>0$ is sometimes called the Hurst index.
Throughout this work we restrict our attention to strictly positive~$X$.

The fundamental result of \cite{lamperti72} states that every pssMp $X$ (not hitting 0) can be represented via a L\'evy process $\xi$ as follows:
\begin{equation}\label{eq:lamperti}
  	X_t=x\exp(\xi_{\tau(tx^{-\alpha})}),\qquad \tau(r):=\inf\Set{s>0\given I_s\geq r},\qquad I_s:=\int_0^s \exp(\alpha \xi_u)\dd u
\end{equation}
where $\limsup_{t\to\infty}\xi_t=\infty$ a.s., and $x>0$ is a given starting position. Moreover, this relation can be inverted to obtain $\xi$ in terms of~$X$.
The Lamperti representation is key for deriving various properties of pssMp~\cite{pardo_envelope}.
Furthermore, it also provides a way to simulate from the law of~$X$, which is important in application of the above mentioned limit theory and beyond.

The purpose of this paper is to investigate the basic discretization scheme, where the path of the L\'evy process $\xi$ is sampled at equidistant times $i/n,i\in \mathbb N$.
Throughout this work we assume that the increment $\xi_{1/n}$ can be sampled exactly and efficiently.
This allows to approximate the integral function $I$, which is then used to construct an approximation $X^\n$ of $X$ at the times of interest.
Our main result is the limit theorem for the scaled error $a_n(X_{t}-X^\n_t)$ as $n\to \infty$, as well as its multidimensional version concerning a finite set of times, see Corollary~\ref{cor:mainFDD}.
This result crucially depends on the limit theory for the integrated process error in~\cite{jacodpaper}, which is extended to include zooming-in on $\xi$~\cite{ivanovs2018} at inverse times.

A result of independent interest is presented in Theorem~\ref{thm:ppsMpzooming},
which complements the classical law of the iterated logarithm for a pssMp at small times~\cite[Thm.\ 7.1]{lamperti72}.
We show that $a_n(X_{t/n}-x)_{t\geq 0}$ has a non-trivial  weak limit as $n\to\infty$ under the obvious regularity condition that there is weak convergence to a non-zero limit for some fixed $x,t>0$ and some positive function~$a_n$. Furthermore, this assumption is equivalent to the regularity of the underlying L\'evy process, which we assume in the above discussed approximation theory.

This work has been motivated by the problem of simulating a stable L\'evy processes conditioned to stay positive, see~\cite[\S 4]{engelke_ivanovs} for various available representations.
Importantly, the most obvious methods result in infinite expected running times. One of the reasons is that for an oscillating L\'evy process the first passage time over a fixed level has infinite expectation.
In this regard we note that~\cite{gonzalez2019varepsilon} recently provided an $\varepsilon$-strong simulation algorithm for the convex minorants of stable meanders, which are closely related to conditioned processes.

The structure of this paper is as follows. We start with the definitions, assumptions and necessary basic theory in \S\ref{sec:prereq}. The limit theory for the approximation is derived in \S\ref{sec:mainresults} relying on the joint stable convergence of some fundamental objects which is proven later in \S\ref{sec:proof}. The scaling limit of a pssMp is studied in \S\ref{sec:zooming} relying on a basic convergence result for L\'evy processes which may be of an independent interest. In \S\ref{sec:application} these results are applied to self-similar L\'evy processes conditioned to stay positive, where we also provide a numerical illustration in the simplest setting of a standard Brownian motion.
We conclude with \S\ref{sec:comments} providing comments about the density assumption and the trapezoidal approximation of the integral.

\section{Definitions and prerequisites}\label{sec:prereq}
\subsection{Fundamentals}
We work with càdlàg processes on a filtered probability space $(\Omega,\mathcal F,(\mathcal F_t)_{t\geq 0},\p)$ and use the Skorokhod $J_1$ topology.
Let $\xi=(\xi_t)_{t\geq 0}$ be a L\'evy process, that is, an adapted càdlàg process such that $\xi_{t+s}-\xi_t$ is independent of $\mathcal F_t$ and has the law of $\xi_s$ for any $t,s\geq 0$.
Furthermore, as indicated above we assume that 
\begin{equation}\label{eq:limsup}
\limsup_{t\to\infty}\xi_t=\infty\qquad \text{a.s.},
\end{equation}
which is satisfied, for example, if $\xi_1$ is integrable and $\e \xi_1\geq 0$, excluding the trivial 0 process. 

Our results are slightly cleaner when formulated using a certain L\'evy process $\widehat \xi$ on the real line. This is the same as saying that $(-\widehat\xi_{(-t)-})_{t\geq 0}$ is an independent copy of $(\widehat\xi_t)_{t\geq 0}$, 
where the left limit is needed to get a càdlàg path over the real line.
 In particular, 
\begin{equation}\label{eq:levyR}\widehat \xi_T\overset{d}{=}{\rm sign}(T)\widehat\xi_{|T|}\end{equation}
for any random $T\in\R$ independent of~$\widehat \xi$, because L\'evy processes do not jump at fixed times.

The concept of stable convergence~\cite{aldous,renyi} is fundamental in discretization of processes~\cite{jacodbook,podolskij2010}.
Consider a sequence of random variables $Z_n$ defined on $(\Omega,\ff,\p)$ and taking values in some Polish space. 
The sequence $Z_n$ is said to converge stably to $Z$ $(Z_n\stably Z)$ defined on an extension $(\widetilde{\Omega},\widetilde{\ff},\widetilde{\pp})$ if
 \begin{equation}\label{eq:stable_def}
    \ee[f(Z_n)Y]\to\widetilde{\ee}[f(Z)Y]
 \end{equation}
  	for all bounded continuous functions $f$ and all bounded $\ff$-measurable~$Y$, see also~\eqref{eq:stable_simple} below for further intuition.
The standard example concerns $Z$ being independent of $\mathcal F$-measurable $Y$, and then the term mixing convergence is sometimes used.

\subsection{Approximation}
Consider the discretized process $\xi^{(n)}$ given by $\xi^{(n)}_t=\xi_{[tn]/n}$, where $[x]$ denotes the integer part of $x$. Later we also use the fractional part $\{x\}=x-[x]$.
The basic approximation of the integrated process $I_t$ in~\eqref{eq:lamperti} is given by the left Riemann sum 
\begin{equation*}
  	I^{(n)}_t:=\int_0^t \exp(\alpha\xi^{(n)}_s)\dd s=\frac{1}{n}\sum_{k=1}^{[tn]}\exp(\alpha\xi_{(k-1)/n})+\frac{\{tn\}}{n}\exp(\alpha \xi_{[tn]/n}).
\end{equation*}
In \S\ref{sec:trapezoid} below we also comment on the use of the trapezoid rule.

Note that the integrals $I_t$ and $I^\n_t$ are continuous and strictly increasing from 0 to $\infty$ a.s., which is an easy consequence of~\eqref{eq:limsup}.
Since $\xi$ has countably many jumps, we see that $I^\n$ converges to $I$ pointwise a.s. 
Define the respective inverse
 \[\tau_n(r):=\inf\Set{s>0\given I^\n_s\geq r},\qquad r\geq 0,\]
 and observe that a.s.\ both $\tau(r)$ and $\tau_n(r)$ are finite and 
\[\tau_n(r)\to \tau(r).\]
Finally, we use the approximation
\[X^\n_t:=x\exp(\xi^\n_{\tau_n(t x^{-\alpha})})=x\exp(\xi_{[\tau_n(t x^{-\alpha})n]/n}),\]
since $\xi$ is sampled at $k/n$ only.

Let us note that $X_t^\n\to X_t$ a.s., because of the continuity of $\xi$ at $\tau(r)$.
The latter readily follows from quasi left-continuity of $\xi$~\cite[Prop.\ I.7]{bertoinbook} and the fact that $I_t$ is continuous and strictly increasing. Hence the main question concerns the speed of convergence.
Finally, observe that $X^\n_t$ coincides with $X_{T^\n}$ for some $T^\n\to t$ a.s., that is, sampling is exact up to time perturbation. More precisely, such $T^\n$ is given by
\begin{equation}\label{eq:time_perturb}
T^\n=x^{\a}I_{[\tau_n(t x^{-\alpha})n]/n},
\end{equation}
so that $\tau(T^\n x^{-\a})=[\tau_n(t x^{-\alpha})n]/n$. The corresponding limit result is also given in the following.
 
Figure \ref{subfig:discretization} below illustrates the discretization of $\xi$ in the case where $n=10$, $\alpha=2$ and $\xi$ is a Brownian motion with unit variance and drift $1/2$ (corresponding to $X$ being a standard Brownian motion conditioned to stay positive, see \S\ref{sec:application}). In Figure \ref{subfig:discretizedintegral} we see the integrals $I$ and $I^\n$ and their inverses at $r=1$.

\begin{figure}[h]
	\centering
	\begin{subfigure}[t]{.48\textwidth}
		\vskip 0pt
		\centering
		\begin{tikzpicture}
			\begin{axis}[tick pos=left, width=1.15\linewidth, ymin=-1.8,ymax=18,ytick={0,5,10,15},xmin=-0.11,xmax=1.1,xtick={0,0.2,0.4,0.6,0.8,1}]
				\addplot+ [mark=none, color=blue] table[x="V1",y="V2", col sep=comma] {exponentiallevy.csv};
				\addplot+ [jump mark left,mark=none,mark options={fill=red, scale=.5}, color=red] table[x="V1",y="V2", col sep=comma] {discretizedexponentiallevy.csv};
			\end{axis}
		\end{tikzpicture}
		\captionsetup{width=.8\linewidth}
		\caption{The processes $\exp(\a\xi_t)$ in blue and $\exp(\a\xi^\n_t)$ in red with $n=10$.}
		\label{subfig:discretization}
	\end{subfigure}\quad%
	\begin{subfigure}[t]{.48\textwidth}
		\vskip 0pt
		\centering
		\begin{tikzpicture}
			\begin{axis}[tick pos=left, width=1.15\linewidth,ymin=-0.53,ymax=5.3,ytick={0,1,2,3,4,5},xmin=-0.11,xmax=1.1,xtick={0,0.2,0.4,0.6,0.8,1}]
				\addplot+ [mark=none, color=blue, solid] table[x="V1",y="V2", col sep=comma] {integral.csv};
				\addplot+ [mark=none, color=red, solid] table[x="V1",y="V2", col sep=comma] {discretizedintegral.csv};
				\draw[dashed,>=stealth,color=blue](axis cs:0.5876691,-0.53)--(axis cs:0.5876691,1);
        		\draw[dashed,>=stealth,color=red](axis cs:0.6474492,-0.53)--(axis cs:0.6474492,1);
        		\draw[dashed,>=stealth,color=black](axis cs:-0.11,1)--(axis cs:1.1,1);
			\end{axis}
		\end{tikzpicture}
		\captionsetup{width=.8\linewidth}
		\caption{Integrals of the processes in Figure \ref{subfig:discretization}. The colored dashed lines mark $\tau(1)$ and $\tau_n(1)$.}
		\label{subfig:discretizedintegral}
	\end{subfigure}
	\caption{An illustration of the discretization and how it effects calculation of $\tau(1)$.}
	\label{fig:discretizationplots}
\end{figure}
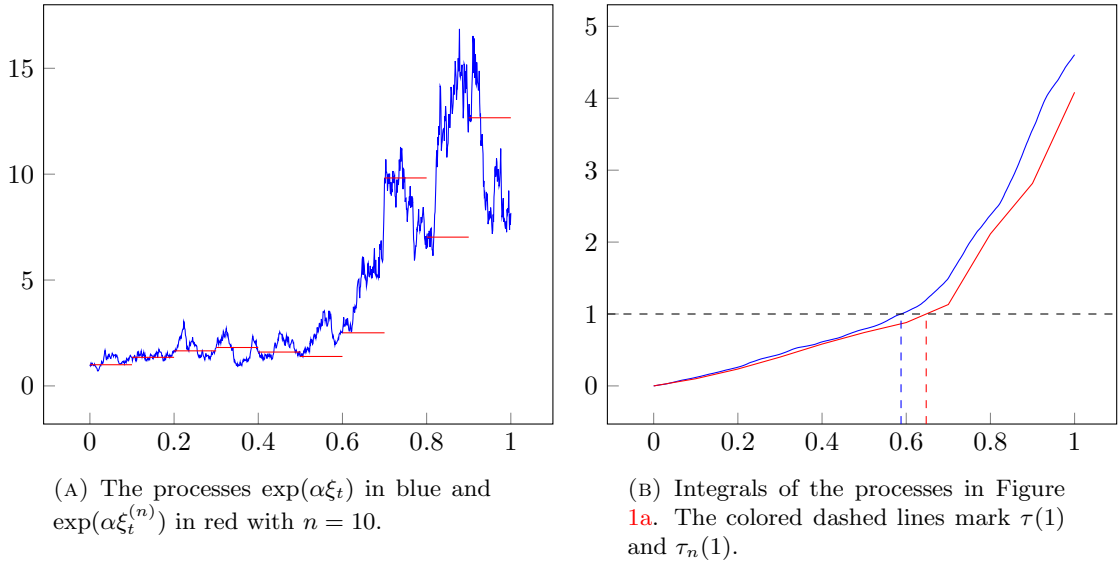

\subsection{Integrated process error}\label{sec:integrated}
Integrated discretization error for It\^o semimartingales has been studied in~\cite{jacodpaper}, see also~\cite[Ch.\ 6]{jacodbook}. 
In our case the function of interest is $f(x)=\exp(\alpha x)$.
Let us first describe the limiting process defined on an extension of the original probability space:
\begin{align}\label{eq:Delta}
  	\Delta_t=\frac{\sigma^2}{\sqrt{12}}\int_0^t f'(\xi_s)\dd W'_s+\sum_{m:T_m\leq t}(f(\xi_{T_m})-f(\xi_{T_m-}))(\kappa_m-\frac{1}{2})+\frac{1}{2}(f(\xi_t)-f(0)).
\end{align}
Here  $W'$ is a standard Brownian motion and $(\kappa_m)_{m\geq 1}$ is an i.i.d.\ sequence of standard uniforms, independent of each other and of~$\ff$.
Furthermore, $(T_m)_{m\geq1}$ denotes a weakly exhausting sequence of the jump times of $\xi$, and $\sigma^2$ is the variance of the Brownian component of~$\xi$.
The filtered extension is taken to be 'very good'~\cite[\S 2.1.4]{jacodbook} so that, in particular, $\Delta_t$ is adapted to $\widetilde{\ff}_t$.

\begin{theorem}[Jacod, Jakubowski, \& M\'emin~\cite{jacodpaper}]\label{thm:jacod}
  	There is the convergence
  	\begin{equation}\label{eq:trapezoidapprox1}
    	(\Delta^\n_t)_{t\geq 0}:=n\left(I_{[tn]/n}-I^{(n)}_{[tn]/n}\right)_{t\geq 0}\stably(\Delta_t)_{t\geq 0},
  	\end{equation}
  	where $\Delta_t$ is defined in~\eqref{eq:Delta}.
\end{theorem}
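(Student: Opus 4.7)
The plan is to prove Theorem~\ref{thm:jacod} by decomposing $\xi$ via L\'evy--It\^o, applying It\^o's formula on each mesh interval $[(k-1)/n,k/n]$, and then assembling the three summands that make up $\Delta_t$. Writing $\xi_t=bt+\sigma W_t+J_t$ with $W$ a standard Brownian motion and $J$ an independent pure-jump L\'evy process, It\^o's formula gives
\begin{equation*}
  f(\xi_s)-f(\xi_{(k-1)/n})=\int_{(k-1)/n}^{s}f'(\xi_{u-})\dd\xi_u+\tfrac{\sigma^2}{2}\int_{(k-1)/n}^{s}f''(\xi_u)\dd u+\sum_{(k-1)/n<u\leq s}\bigl[f(\xi_u)-f(\xi_{u-})-f'(\xi_{u-})\Delta\xi_u\bigr].
\end{equation*}
I would integrate in $s\in[(k-1)/n,k/n]$, multiply by $n$, sum over $k\leq[tn]$, and apply Fubini to the stochastic integrals, splitting $\Delta^\n_t=M^\n_t+B^\n_t+J^\n_t$ into a Brownian-driven martingale, a finite-variation drift, and a large-jump piece.

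For the martingale piece, Fubini yields (ignoring small jumps) $M^\n_t\approx\sigma\sum_k n\int_{(k-1)/n}^{k/n}(k/n-u)f'(\xi_{u-})\dd W_u$, whose predictable quadratic variation is a Riemann sum for a deterministic multiple of $\int_0^t f'(\xi_s)^2\dd s$. A stable CLT for continuous martingales \`a la Jacod then yields the mixed Gaussian limit $(\sigma^2/\sqrt{12})\int_0^{\cdot}f'(\xi_s)\dd W'_s$ with $W'$ independent of $\ff$. For the jump piece, a large jump at time $T_m\in[(k-1)/n,k/n]$ contributes essentially
\begin{equation*}
  n(k/n-T_m)\bigl(f(\xi_{T_m})-f(\xi_{T_m-})\bigr)=(1-U_m)\bigl(f(\xi_{T_m})-f(\xi_{T_m-})\bigr),\qquad U_m:=n(T_m-(k-1)/n).
\end{equation*}
By the elementary fact that the location of a jump of a L\'evy process inside a prescribed interval is uniform, the $U_m$ are asymptotically i.i.d.\ uniform on $[0,1]$ and independent of $\ff$. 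Setting $\kappa_m:=1-U_m$ and splitting $1-U_m=(\kappa_m-\tfrac12)+\tfrac12$ separates the fluctuation $\sum(\kappa_m-\tfrac12)(f(\xi_{T_m})-f(\xi_{T_m-}))$ from a mean $\tfrac12\sum(f(\xi_{T_m})-f(\xi_{T_m-}))$; the latter, together with $B^\n_t$ and the identity $f(\xi_t)-f(0)=(\text{continuous part})+\sum_{T_m\leq t}(f(\xi_{T_m})-f(\xi_{T_m-}))$, reproduces the drift term $\tfrac12(f(\xi_t)-f(0))$.

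The main obstacle is the joint stable convergence: one must verify that the fresh randomness in $W'$ and in the $(\kappa_m)$ is simultaneously independent of $\ff$. The natural route is a truncation-and-blocking argument. First truncate jumps at size $\varepsilon$; then on any mesh interval containing at most one large jump, the martingale piece depends only on Brownian increments while the jump piece depends only on the uniform position of $T_m$ within that interval, so the two sources of randomness are conditionally independent given $\ff$. Passing $n\to\infty$ and then $\varepsilon\downarrow 0$, with a uniform $L^2$-estimate controlling the small-jump contribution to the martingale, preserves this independence in the limit, and Aldous-type tightness applied separately to the martingale and jump parts upgrades finite-dimensional stable convergence to the functional convergence asserted in~\eqref{eq:trapezoidapprox1}.
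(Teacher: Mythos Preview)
The paper does not give its own proof of Theorem~\ref{thm:jacod}: this result is quoted from Jacod, Jakubowski and M\'emin~\cite{jacodpaper} (see also~\cite[Ch.~6]{jacodbook}) and is used throughout as an input. There is therefore nothing in the present paper to compare your argument against.

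That said, your outline is recognisably the strategy of~\cite{jacodpaper,jacodbook}: It\^o's formula on each mesh interval, Fubini to get a weighted Brownian martingale whose quadratic variation produces the factor $\sigma^2/\sqrt{12}$, uniformity of jump times within their mesh cells to generate the $\kappa_m$, and a truncation $\varepsilon\downarrow 0$ to control small jumps. As a sketch this is fine, but several points you pass over are where the actual work lies. First, the ``ignoring small jumps'' in your martingale computation hides the compensated small-jump integral, which contributes to the martingale part and must be shown to vanish after the $\varepsilon$-truncation limit; this requires a uniform-in-$n$ estimate, not just a pointwise one. Second, the claim that the $U_m$ are asymptotically i.i.d.\ uniform and independent of $\ff$ needs the full conditional-independence machinery of stable convergence (e.g.\ \cite[Ch.~4]{jacodbook}), not merely the elementary fact about a single jump time. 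Third, functional tightness in $J_1$ for the jump sum is delicate because the prelimit approximates each jump by a steep linear piece; this is exactly why the theorem is stated at the gridpoints $[tn]/n$ rather than at~$t$, a subtlety the paper remarks on just after the statement. None of this invalidates your plan, but a self-contained proof would need to address these points explicitly rather than invoke them.
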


The result is stated for the difference of the integral and its approximation up to the last epoch $[tn]/n$ rather than time $t$. In fact, there is no functional convergence in Skorokhod's $J_1$-topology in the latter setting unless $\xi$ is continuous, see also \cite{jacodpaper}.
    	The problem here is that the jumps enter into the limit expression, whereas the pre-limit evolves continuously approximating these jumps by steep (almost linear) curves. Intuitively, this can be remedied by switching to Skorokhod's weaker $M_1$-topology, where the completed graphs of paths are compared.
    	We do not pursue this question in the present paper, though.

\subsection{Regularity of the L\'evy process}
Not surprisingly, our limit result requires certain regularity of the process~$\xi$.
Following~\cite{ivanovs2018} we assume that there exists a positive scaling function $a_n>0$ and a random variable $\widehat\xi_1\neq 0$ such that
\begin{equation}\label{eq:zoominginlimit}\tag{A1}
    		a_n\xi_{\frac{1}{n}}\convd\widehat\xi_1, \qquad\text{ as }\mathbb R\ni n\to\infty.
  		\end{equation}
Necessarily, $\widehat\xi_1$ is infinitely divisible and the corresponding L\'evy process $\widehat\xi$ is $1/\beta$-self-similar with $\beta\in(0,2]$, whereas $a_n$ is regularly varying at infinity with index $1/\beta$. Importantly, this convergence extends to the weak convergence for processes: 
\begin{equation}\label{eq:levyzoomingin}
\left(a_n\xi_{t/n}\right)_{t\geq 0}\convd(\widehat{\xi}_t)_{t\geq 0}.
\end{equation}
Intuitively, this is understood as zooming in on $\xi$ at the origin.
It must be noted that~\eqref{eq:zoominginlimit} can be formulated in terms of the L\'evy triplet of~$\xi$,
yielding the parameters of $\widehat \xi$ and the scaling function~$a_n$~\cite[Thm.\ 2]{ivanovs2018}. See also~\cite{bisewski_iva} for further examples and simple sufficient conditions.
Finally, it will be shown later that the convergence in~\eqref{eq:levyzoomingin} is, in fact, stable and the limiting $\widehat \xi$ is independent of~$\ff$.

Our limit theory will also require convergence of $\{\tau(r)n\}$.
The classic result of~\cite{K37} states that such a sequence converges to a standard uniform random variable if the distribution of $\tau(r)$ is absolutely continuous, see also~\cite{tukey} for sufficient and necessary conditions.
Again the convergence is stable and the limiting uniform is independent of~$\tau(r)$, see~\cite{jacodbook}. 
We impose a slightly stronger assumption:
\begin{equation}\label{eq:density}\tag{A2}
\text{The law of }(\tau(r),\xi_{\tau(r)})\text{ is absolutely continuous for every (small) }r>0.
\end{equation}
This assumption on the inverse can be replaced by an assumption on the integral~$I$.
More precisely, in \S\ref{sec:density} we show that it is sufficient to assume that the pair
\[\big(\int_0^t \exp(\a \xi_s)\dd s,\exp(\a \xi_t)\big)\]
has a density $g_t(x,y)$ which is jointly continuous in $t,x,y>0$.
The latter question concerns the exponential functional and has been studied in a number of papers, see~\cite{vostrikova, cpy, PRvS}.
Verification of this condition, however, is still non-trivial and thus we avoid assuming~\eqref{eq:density} in various places, including \S\ref{sec:rate} which establishes the rate of convergence of our approximation.

\section{Approximation results}\label{sec:mainresults}
Throughout this paper we assume~\eqref{eq:limsup}. The assumptions \eqref{eq:zoominginlimit} and~\eqref{eq:density} are needed only for some results, and this is stated at the corresponding places.

Our main aim is to establish a limit result (as $n\to\infty$) for the scaled relative error, which according to~\eqref{eq:lamperti} is given by
\begin{equation}\label{eq:step1}a_n\frac{X_t-X^\n_t}{X_t}=a_n(\xi_{\tau(r)}-\xi_{[\tau_n(r)n]/n})(1+\oh_\p(1)),\qquad r=tx^{-\a},\end{equation}
where we also use the mean value theorem and the fact that $\xi$ is continuous at~$\tau(r)$. The scaling sequence $a_n>0$ will be chosen according to~\eqref{eq:zoominginlimit} in the following.
Letting \[\widehat\xi^\n=a_n(\xi_{\tau(r)+t/n}-\xi_{\tau(r)})_{t\in\R}\] be the two-sided process arising upon zooming in on $\xi$ at $(\tau(r),\xi_{\tau(r)})$, we find that
\begin{equation}\label{eq:step2}a_n(\xi_{\tau(r)}-\xi_{[\tau_n(r)n]/n})=-\widehat\xi^\n_{[\tau_n(r)n]-\tau(r)n}.
\end{equation}
Hence we need to establish the joint limit of the two-sided process $\widehat\xi^\n$ and the scaled time difference $\tau(r)n-[\tau_n(r)n]$, and to further extend it to the multivariate setting with $0<t_1<\cdots<t_d$.
It will be shown that the scaled time differences $n(\tau(r_i)-\tau_n(r_i))$ are not affected by infinitesimally small time intervals, whereas the zoomed-in processes are given by the local behavior of $\xi$ at $\tau(r_i)$ 
and in the limit result in independent copies of~$\widehat\xi$.

\subsection{Time variable and the inverse}
Our first result concerns the error in the inverse~$\tau_n(r)$. The limiting random variable $L(r)$, defined below, will play an important role in the following. 
\begin{proposition}\label{prop:tau}
  	For any $r>0$ it holds that
  	\begin{equation}\label{eq:L}
    	n(\tau(r)-\tau_n(r))\stably L(r):=-\Delta_{\tau(r)}\exp(-\alpha \xi_{\tau(r)}),
  	\end{equation}
  	where the process $\Delta_t$ is defined in~\eqref{eq:Delta}. 
\end{proposition}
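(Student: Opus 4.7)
The plan is to derive a first-order inversion identity relating $n(\tau(r)-\tau_n(r))$ to the integrated process error $n(I_{\tau(r)}-I^{(n)}_{\tau(r)})$, and then to read off the limit from Theorem~\ref{thm:jacod}. A key preliminary is that $\xi$ is a.s.\ continuous at $\tau(r)$: since $I$ is continuous and strictly increasing, $\tau(r)$ is a predictable stopping time with announcing sequence $\tau(r-1/k)\uparrow\tau(r)$, and quasi-left-continuity of the L\'evy process forces $\xi_{\tau(r)-}=\xi_{\tau(r)}$ a.s. Consequently the limit process $\Delta$ defined in~\eqref{eq:Delta} is a.s.\ continuous at $\tau(r)$, since its only jumps occur at the jump epochs of~$\xi$.

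For the inversion I use $I_{\tau(r)}=r=I^{(n)}_{\tau_n(r)}$ together with absolute continuity of $I^{(n)}$ to obtain
\[
I^{(n)}_{\tau(r)}-I_{\tau(r)} \;=\; I^{(n)}_{\tau(r)}-I^{(n)}_{\tau_n(r)} \;=\; \int_{\tau_n(r)}^{\tau(r)} \exp(\alpha\xi^{(n)}_s)\dd s.
\]
Since $\tau_n(r)\to\tau(r)$ a.s.\ and $\xi$ is continuous at $\tau(r)$, the integrand converges to $\exp(\alpha\xi_{\tau(r)})$ uniformly on the shrinking interval of integration, yielding
\[
n\bigl(I^{(n)}_{\tau(r)}-I_{\tau(r)}\bigr) \;=\; n\bigl(\tau(r)-\tau_n(r)\bigr)\bigl[\exp(\alpha\xi_{\tau(r)})+\oh_\p(1)\bigr].
\]

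The remaining task is to show $n(I_{\tau(r)}-I^{(n)}_{\tau(r)})\stably\Delta_{\tau(r)}$. Theorem~\ref{thm:jacod} provides the stable $J_1$-convergence $\Delta^{(n)}_t = n(I_{[tn]/n}-I^{(n)}_{[tn]/n})\stably\Delta_t$, while
\[
n\bigl(I_t-I^{(n)}_t\bigr)-\Delta^{(n)}_t \;=\; n\int_{[tn]/n}^{t}\bigl(\exp(\alpha\xi_s)-\exp(\alpha\xi_{[tn]/n})\bigr)\dd s
\]
tends to zero at any continuity point of $\xi$, by the same averaging argument as above. To evaluate the functional limit at the random time $t=\tau(r)$, I apply the continuous mapping principle for stable convergence: since $\tau(r)$ is $\ff$-measurable and a.s.\ a continuity point of $\Delta$, we get the joint stable convergence $(\Delta^{(n)},\tau(r))\stably(\Delta,\tau(r))$, and the evaluation map $(f,t)\mapsto f(t)$ is continuous at such pairs. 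This gives $\Delta^{(n)}_{\tau(r)}\stably\Delta_{\tau(r)}$, and combining with the inversion identity via Slutsky then produces $n(\tau(r)-\tau_n(r))\stably -\Delta_{\tau(r)}\exp(-\alpha\xi_{\tau(r)})=L(r)$.

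The most delicate point is this last step: transferring functional stable $J_1$-convergence of $\Delta^{(n)}$ to convergence of $\Delta^{(n)}_{\tau(r)}$ at a random $\ff$-measurable time. It rests on (i) the predictability argument ensuring continuity of $\Delta$ at $\tau(r)$, and (ii) the fact that stable convergence is preserved under continuous operations taken jointly with $\ff$-measurable auxiliary variables.
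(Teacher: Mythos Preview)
Your proof is correct and follows essentially the same route as the paper's: both establish $n(I_{\tau(r)}-I^{(n)}_{\tau(r)})\stably\Delta_{\tau(r)}$ via Theorem~\ref{thm:jacod} and continuous mapping at the random continuity point $\tau(r)$, then use the identity $n(I_{\tau(r)}-I^{(n)}_{\tau(r)})=n\int_{\tau(r)}^{\tau_n(r)}\exp(\alpha\xi^{(n)}_s)\,\dd s$ together with $\xi^{(n)}_s\to\xi_{\tau(r)}$ on the shrinking interval to extract $n(\tau(r)-\tau_n(r))$. Your justification of the continuous-mapping step (predictability of $\tau(r)$, joint stable convergence with $\ff$-measurable variables) is slightly more explicit than the paper's terse appeal to the continuous mapping theorem, but the argument is the same.
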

\begin{proof}
First, we show that
\begin{equation}\label{eq:integralapproxattau1}
    	n(I_{\tau(r)}-I^\n_{\tau(r)})\stably\Delta_{\tau(r)}.
  	\end{equation}
  	Recall that $\xi$ is continuous at $\tau(r)$ a.s., and note that the same is true for the $\Delta$ process.
  	Hence by Theorem~\ref{thm:jacod} and the continuous mapping theorem we have the stated convergence, where $\tau(r)$ is replaced by $t_n(r):=[\tau(r)n]/n$.
It is left to show that the remaining term vanishes. This term, is upper bounded by
\[\exp(\a \sup\Set{\xi_t\given t\in[t_n(r),\tau(r)]})-\exp(\a\xi_{t_n(r)})\]
converging to 0 a.s.\ by the continuity of $\xi$ at $\tau(r)$. The lower bound is treated analogously.

Next, observe that
\[n\int_{\tau(r)}^{\tau_n(r)}\exp(\a\xi^\n_t)\dd t=n(r-I^\n_{\tau(r)})=n(I_{\tau(r)}-I^\n_{\tau(r)})\stably \Delta_{\tau(r)}.\]
Similar bounds to above show that the left-hand side is given by \[n(\tau_n(r)-\tau(r))\exp(\a\xi_{\tau(r)})\] times a term converging to 1 a.s.
The result readily follows.
\end{proof}
Observe that Proposition \ref{prop:tau} above easily extends to a multivariate version with $0<r_1<\cdots<r_d$.

\subsection{Rate of convergence}\label{sec:rate}
In order to proceed we need to supplement the convergence in Theorem~\ref{thm:jacod} by zooming in on $\xi$ at the times $\tau(r_i)$.
\begin{theorem}\label{thm:joint}
Assume \eqref{eq:zoominginlimit}. For any $0<r_1<\cdots<r_d$ and $r_i^n\to r_i$ there is the stable convergence
\[\Big((\Delta^\n_t)_{t\geq 0},\big(a_n(\xi_{\tau(r^n_i)+t/n}-\xi_{\tau(r^n_i)})_{t\in\R}\big)_{i=1,\ldots, d}\Big)\stably\Big((\Delta_t)_{t\geq 0},\big((\widehat \xi^i)_{t\in\R}\big)_{i=1,\ldots,d}\Big),\]
where $\widehat\xi^i$ are independent copies of $\widehat\xi$, also independent of everything else.
\end{theorem}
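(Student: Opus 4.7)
The plan is to upgrade the stable convergence $\Delta^\n\stably\Delta$ of Theorem~\ref{thm:jacod} by pairing it with the $d$ two-sided zoomed-in processes $\widehat\xi^\n_i := a_n(\xi_{\tau(r_i^n)+\cdot/n}-\xi_{\tau(r_i^n)})$. The guiding principle is scale separation: $\Delta_t$ is governed by the macroscopic behaviour of $\xi$ on $[0,t]$, whereas each $\widehat\xi^\n_i$ depends on $\xi$ only on a window of length $O(1/n)$ around the stopping time $\tau(r_i^n)$. Since $\tau$ is a.s.\ continuous and strictly increasing, the stopping times $\tau(r_1^n)<\cdots<\tau(r_d^n)$ are eventually ordered with strictly positive gaps, so these windows are pairwise disjoint for large $n$. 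This should force the $d+1$ blocks $\Delta^\n,\widehat\xi^\n_1,\ldots,\widehat\xi^\n_d$ to become asymptotically mutually independent, and it only remains to identify the marginal limits of the $\widehat\xi^\n_i$ as independent two-sided copies of $\widehat\xi$.

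For the forward halves $\widehat\xi^{\n,+}_i := a_n(\xi_{\tau(r_i^n)+t/n}-\xi_{\tau(r_i^n)})_{t\geq 0}$, the strong Markov property at $\tau(r_i^n)$ gives the distributional identity of $\widehat\xi^{\n,+}_i$ with $(a_n\xi_{t/n})_{t\geq 0}$ together with independence from $\ff_{\tau(r_i^n)}$. Combining this with (A1) upgraded to functional stable convergence of L\'evy processes (as promised in \S\ref{sec:prereq}) and iterating the strong Markov property through $\tau(r_1^n)<\cdots<\tau(r_d^n)$ yields the joint stable convergence of $(\widehat\xi^{\n,+}_1,\ldots,\widehat\xi^{\n,+}_d)$ to $d$ independent copies of the one-sided $\widehat\xi$, all independent of $\ff$ and hence of $\Delta$.

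The backward halves $\widehat\xi^{\n,-}_i := a_n(\xi_{\tau(r_i^n)+t/n}-\xi_{\tau(r_i^n)})_{t\leq 0}$, being $\ff_{\tau(r_i^n)}$-measurable, are where the real work lies. The plan is a left-approximation argument: approximate $\tau(r_i^n)$ from the left by stopping times $\sigma_n^i$ with $\tau(r_i^n)-\sigma_n^i\to 0$ a.s.\ but $n(\tau(r_i^n)-\sigma_n^i)\to\infty$, apply the strong Markov property at $\sigma_n^i$, and then use time-reversal of the L\'evy bridge on $[\sigma_n^i,\tau(r_i^n)]$ (which for fixed endpoints has the same law as the original L\'evy process) to represent the backward window as the scaling of a fresh independent copy of $\xi$ run forward. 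Quasi-left-continuity of $\xi$ at $\tau(r_i)$ together with the scale separation then identifies the joint backward limit with independent copies of $\widehat\xi\vert_{(-\infty,0]}$, independent of the forward halves and of $\Delta$, which by the definition of $\widehat\xi$ in \eqref{eq:levyR} combines with Step~2 to yield the claimed two-sided limit. The main obstacle will be precisely this backward step: $\tau(r)$ is not a stopping time when approached from the left, so the time-reversal must be carried out through a truncation-and-limit procedure, exploiting the stable structure of the one-sided zooming to argue that the truncation error vanishes uniformly in the scaling.
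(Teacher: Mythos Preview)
Your structural outline is sound---scale separation, surgery of $\Delta^\n$ on the small windows, and the forward halves via strong Markov are all as in the paper. The genuine gap is in Step~3, the backward halves. Two problems: first, the claim that a L\'evy bridge reversed in time ``has the same law as the original L\'evy process'' is not correct in general, and in any case the interval $[\sigma_n^i,\tau(r_i^n)]$ has random length depending on the future of $\xi$ after $\sigma_n^i$, so the reversal is not a clean operation. Second, and more fundamentally, your choice $n(\tau(r_i^n)-\sigma_n^i)\to\infty$ is the wrong scaling: if $S_n:=n(\tau(r_i^n)-\sigma_n^i)\to\infty$, then the backward window $[\tau(r_i^n)-T'/n,\tau(r_i^n)]$ corresponds, under the forward zoom at $\sigma_n^i$, to times $[S_n-T',S_n]$, which run off to infinity and do not converge in $D[0,\infty)$.

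The paper's resolution is a small but decisive twist on your left-approximation idea: take $\sigma_n^i=\tau(r_i^n-T/n)$ for a \emph{fixed} $T>0$, so that $n(\tau(r_i^n)-\sigma_n^i)\to s_i:=T\exp(-\alpha\xi_{\tau(r_i)})$ a.s., a \emph{finite} $\ff$-measurable limit. Now the one-sided zoom at the stopping time $\sigma_n^i$ for $t\geq 0$ already covers both sides of $\tau(r_i^n)$, and by Lemma~\ref{lem:jointzooming} it converges to $(\widehat\xi^i_t)_{t\geq 0}$ independent of $\ff$ and hence of $s_i$. The two-sided zoom at $\tau(r_i^n)$ restricted to $[-T',T']$ is then, in the limit, $(\widehat\xi^i_{s_i+t}-\widehat\xi^i_{s_i})_{t\in[-T',T']}$; since $s_i>T'$ with arbitrarily high probability once $T$ is large, stationarity of increments gives this the law of $(\widehat\xi_t)_{t\in[-T',T']}$, independent of $s_i$ and everything else. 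No bridge reversal is needed.
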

The proof of this result is postponed to~\S\ref{sec:proof1}.
It is very important that the time $t$ is allowed to be negative, which is a non-trivial extension of the case $t\geq 0$. This is needed, because the discretized epoch $[\tau_n(r)n]/n$ may be smaller than $\tau(r)$. 
Now the arguments underlying Proposition~\ref{prop:tau} readily yield the joint stable convergence:
\begin{equation}\label{eq:jointtau}\Big(n(\tau(r_i)-\tau_n(r_i)),a_n(\xi_{\tau(r_i)+t/n}-\xi_{\tau(r_i)})_{t\in\R}\Big)_{i=1,\ldots, d}\stably\Big(L(r_i),(\widehat \xi^i)_{t\in\R}\Big)_{i=1,\ldots,d}.\end{equation}

Next, we turn our attention to the pssMp and reconsider~\eqref{eq:step1} and~\eqref{eq:step2}.
Note that~\eqref{eq:jointtau} readily yields the result for the error in approximation of $X$ where $\tau_n(r)$ is used instead of $[\tau_n(r)n]/n$, but we do not observe $\xi_{\tau_n(r)}$. 
Our main limit theorem presented in \S\ref{sec:limit} requires further work and assumptions, whereas here we establish the rate of convergence in our pssMp approximation up to a bounded stochastic term.

Consider~\eqref{eq:step1} and the respective upper bound:
\[a_n(\xi_{\tau(r)}-\xi_{[\tau_n(r)n]/n})\leq a_n(\xi_{\tau(r)}-\inf_{t\in [0,1]}\xi_{\tau_n(r)-t/n})=:\overline B^\n(r),\]
and analogous lower bound $\underline B^\n(r)$ when using~$\sup$. According to~\eqref{eq:jointtau} we have
\[\overline B^\n(r)=-\inf_{t\in[0,1]}\widehat\xi^\n_{-n(\tau(r)-\tau_n(r))-t}\stably -\inf_{t\in[0,1]}\widehat\xi_{-L(r)-t},\]
because $\widehat\xi$ does not jump at fixed times. Since $(-\widehat\xi_{(-t)-})_{t\in\R}$ has the same law as $(\widehat\xi_t)_{t\in\R}$, we conclude that 
\begin{equation}\label{eq:bounds}\Big(\overline B^\n(r_i),\underline B^\n(r_i)\Big)_{i=1,\ldots,d}\stably \Big(\sup_{t\in[0,1]}\widehat\xi_{L(r_i)+t},\inf_{t\in[0,1]}\widehat\xi_{L(r_i)+t}\Big)_{i=1,\ldots,d}.\end{equation}
The following result is now immediate from~\eqref{eq:step1}. 
It establishes the rate of convergence $a_n^{-1}$ and provides explicit limiting bounds.
\begin{corollary}\label{cor:bonds}
Assuming~\eqref{eq:zoominginlimit}, for any $x>0$ and $0<t_1<\cdots<t_d$ it holds that
\[\underline B^\n(t_ix^{-\a})-\oh_\p(1)\leq a_n\left(\frac{X_{t_i}-X_{t_i}^\n}{X_{t_i}}\right)\leq \overline B^\n(t_ix^{-\a})+\oh_\p(1),\quad i=1,\ldots,d,\]
where the joint limit for the bounds is given in~\eqref{eq:bounds}.
\end{corollary}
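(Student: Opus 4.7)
The plan is to combine the expansion \eqref{eq:step1} with a pathwise sandwich of the scaled increment between $\underline B^\n(r_i)$ and $\overline B^\n(r_i)$. Writing $r_i=t_ix^{-\a}$, I would decompose
\begin{equation*}
a_n\frac{X_{t_i}-X^\n_{t_i}}{X_{t_i}} = a_n\bigl(\xi_{\tau(r_i)}-\xi_{[\tau_n(r_i)n]/n}\bigr)+a_n\bigl(\xi_{\tau(r_i)}-\xi_{[\tau_n(r_i)n]/n}\bigr)\oh_\p(1),
\end{equation*}
and treat the two terms on the right-hand side separately.

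For the leading term, I would observe that $[\tau_n(r_i)n]/n=\tau_n(r_i)-s_n/n$ for some (random) $s_n\in[0,1]$, so $\xi_{[\tau_n(r_i)n]/n}$ lies between $\inf_{t\in[0,1]}\xi_{\tau_n(r_i)-t/n}$ and $\sup_{t\in[0,1]}\xi_{\tau_n(r_i)-t/n}$. Subtracting this chain of inequalities from $\xi_{\tau(r_i)}$ and multiplying by $a_n>0$ gives directly the pathwise bounds
\begin{equation*}
\underline B^\n(r_i)\leq a_n\bigl(\xi_{\tau(r_i)}-\xi_{[\tau_n(r_i)n]/n}\bigr)\leq \overline B^\n(r_i),
\end{equation*}
simultaneously for $i=1,\dots,d$.

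It remains to absorb the residual term into an additive $\oh_\p(1)$. By \eqref{eq:bounds}, the vector $(\overline B^\n(r_i),\underline B^\n(r_i))_{i=1,\dots,d}$ converges stably to a finite random limit, so the sandwiched quantity is $\Oh_\p(1)$ jointly in $i$. Hence its product with the $\oh_\p(1)$ factor produced by the mean value theorem in \eqref{eq:step1} is itself $\oh_\p(1)$, and plugging back into the decomposition above yields the stated one-sided inequalities with a common $\oh_\p(1)$ remainder; the joint distributional information on the bounds is then exactly \eqref{eq:bounds}. The only subtle point is this final absorption step: one must ensure that the multiplicative $\oh_\p(1)$ cannot interact badly with a possibly large zoomed-in increment. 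Tightness supplied by \eqref{eq:bounds} is precisely what rules this out, which is why the corollary is indeed immediate given the machinery already developed.
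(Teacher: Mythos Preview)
Your argument is correct and is exactly the reasoning the paper has in mind when it declares the corollary ``immediate from~\eqref{eq:step1}'': the pathwise sandwich via $[\tau_n(r_i)n]/n=\tau_n(r_i)-s_n/n$ with $s_n\in[0,1)$ gives the bounds, and tightness from~\eqref{eq:bounds} lets you absorb the multiplicative $\oh_\p(1)$ into an additive one. There is no discrepancy with the paper's approach.
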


\subsection{Discretization error in pssMp}\label{sec:limit}
More precise analysis requires further work and it hinges on the assumption~\eqref{eq:density} implying, in particular, that $\{\tau(r)n\}$ converges to the standard uniform distribution. 
We have the following generalization of Theorem~\ref{thm:joint}.
\begin{theorem}\label{thm:jointU} Consider $0<r_1<\cdots<r_d$ and assume~\eqref{eq:zoominginlimit} and~\eqref{eq:density}. Then
\begin{align*}\Big((\Delta^\n_t)_{t\geq 0},\big(\{\tau(r_i)n\},a_n(\xi_{\tau(r_i)+t/n}-\xi_{\tau(r_i)})_{t\in\R}\big)_{i=1,\ldots, d}\Big)
\stably\Big((\Delta_t)_{t\geq 0},\big(U_i,(\widehat\xi^i_t)_{t\in\R}\big)_{i=1,\ldots, d}\Big),\end{align*}
where $U_i$ are independent standard uniforms, also independent of the rest.
\end{theorem}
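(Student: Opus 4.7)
The plan is to augment Theorem~\ref{thm:joint} with joint stable convergence $(\{\tau(r_i)n\})_i \stably (U_i)_i$ of the fractional parts to i.i.d.\ uniforms independent of the other limits, via a Riemann--Lebesgue argument enabled by the density hypothesis~\eqref{eq:density}. By density of trigonometric polynomials in $C([0,1]^d)$ it suffices to test against characters $f_k(u) = e^{2\pi i \langle k, u\rangle}$ for $k \in \Z^d$. Since $k_j \in \Z$, we have $f_k(\{n\tau(r_j)\}_j) = e^{2\pi i n \langle k, \tau\rangle}$ with $\tau := (\tau(r_j))_j$, and the statement reduces to showing that for every bounded continuous $H$ on the target Skorokhod space and every bounded $\mathcal{F}$-measurable $Y$,
\[
\mathbb{E}\big[e^{2\pi i n \langle k, \tau\rangle} H(\Delta^\n, (\widehat\xi^{(n,i)})_i) Y\big] \longrightarrow \mathbf{1}_{\{k=0\}}\, \widetilde{\mathbb{E}}\big[H(\Delta, (\widehat\xi^i)_i) Y\big],
\]
using the abbreviation $\widehat\xi^{(n,i)} := a_n(\xi_{\tau(r_i)+\cdot/n} - \xi_{\tau(r_i)})$. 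The case $k=0$ is precisely Theorem~\ref{thm:joint}; for $k\neq 0$ the right-hand side vanishes (since $\widetilde{\mathbb{E}}[e^{2\pi i \langle k, U\rangle}] = 0$), so the task is to show the left-hand side does as well.

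Iterating~\eqref{eq:density} via the strong Markov property at $\tau(r_1) < \cdots < \tau(r_d)$ gives a joint density $\rho(t, y)$ of $(\tau, \xi_\tau)$ on $\R^{2d}$. Conditioning on $(\tau, \xi_\tau)$ rewrites the left-hand side as the oscillatory integral
\[
\int_{\R^{2d}} e^{2\pi i n \langle k, t\rangle} \Phi_n(t, y)\, \rho(t, y)\, dt\, dy, \qquad \Phi_n(t, y) := \mathbb{E}\big[H(\Delta^\n, (\widehat\xi^{(n,i)})_i) Y \,\big|\, \tau = t,\, \xi_\tau = y\big],
\]
which is uniformly bounded by $\|H\|_\infty \|Y\|_\infty$. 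Setting $\Phi(t, y) := \widetilde{\mathbb{E}}[H(\Delta, (\widehat\xi^i)_i) Y \mid \tau = t, \xi_\tau = y]$, if one can show that $\Phi_n \to \Phi$ in $L^1(\rho\, dt\, dy)$, then splitting the integrand as $\Phi_n\rho = \Phi\rho + (\Phi_n - \Phi)\rho$ finishes the proof: the first piece vanishes by the classical Riemann--Lebesgue lemma and the second by the $L^1$-bound.

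The main obstacle is this $L^1$-upgrade. Weak convergence $\int F(\Phi_n - \Phi)\rho \to 0$ for bounded continuous $F$ on $\R^{2d}$ is immediate from Theorem~\ref{thm:joint} applied with the bounded $\mathcal{F}$-measurable test $F(\tau, \xi_\tau) Y$, but the rapidly oscillating character $e^{2\pi i n \langle k, t\rangle}$ is not a fixed test function, so weak convergence alone will not suffice. My plan to handle this is a regularization in which the zoomed-in processes are smoothed at a scale $\varepsilon_n/n$ with $\varepsilon_n \to 0$ slowly: the strong Markov property at $\tau(r_i)$ combined with the time-reversal identity~\eqref{eq:levyR} (used to control the pre-$\tau(r_i)$ part of the two-sided zoomed-in processes) then makes the conditional law of the regularized zoomed-in processes given $(\tau, \xi_\tau) = (t, y)$ jointly continuous in $(t, y)$. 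This yields pointwise convergence $\Phi_n^\varepsilon \to \Phi$ via Theorem~\ref{thm:joint}, hence $L^1$-convergence by dominated convergence, while the regularization error $|\Phi_n - \Phi_n^\varepsilon| \to 0$ is controlled by uniform continuity of $H$ together with tightness of the zoomed-in processes on compact time windows.
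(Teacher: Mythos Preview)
Your Riemann--Lebesgue reduction is natural, and you correctly identify that the crux is upgrading the weak convergence $\int F(\Phi_n-\Phi)\rho\to 0$ (for fixed bounded continuous $F$) to $L^1(\rho)$-convergence of $\Phi_n$. The regularization you sketch does not close this gap. Smoothing the zoomed-in processes at scale $\varepsilon_n/n$ affects only one ingredient of $\Phi_n(t,y)=\mathbb E[H(\Delta^\n,(\widehat\xi^{(n,i)})_i)\,Y\mid\tau=t,\xi_\tau=y]$; the conditional law of $\Delta^\n$ and of the arbitrary $\mathcal F$-measurable $Y$ given $(\tau,\xi_\tau)=(t,y)$ is a bridge-type object depending on the full path of $\xi$, untouched by your regularization. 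Your sentence ``this yields pointwise convergence $\Phi_n^\varepsilon\to\Phi$ via Theorem~\ref{thm:joint}'' is where the argument breaks: Theorem~\ref{thm:joint} gives only \emph{unconditional} stable convergence, and joint continuity of a conditional law in the conditioning variable does not promote weak convergence of integrated quantities to pointwise convergence of the conditional expectations. What you would actually need is a conditional version of Theorem~\ref{thm:joint} under the bridge law $\mathbb P(\,\cdot\mid\tau=t,\xi_\tau=y)$, which is at least as delicate as the theorem you are trying to prove. Note also that the identity~\eqref{eq:levyR} is a statement about the limiting two-sided process $\widehat\xi$, not about $\xi$; it tells you nothing about the conditional law of the pre-$\tau(r_i)$ increments of $\xi$ given $(\tau,\xi_\tau)$.

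The paper sidesteps all of this by never conditioning on the full $(\tau,\xi_\tau)$. For $d=1$ (Lemma~\ref{lem:conv}) it uses the Markov property at $\tau(r-\delta)$ to decompose $\{\tau(r)n\}=\{V_n+U_n\}$, where $U_n=\{\tau n\}$ is built from a short post-$\tau(r-\delta)$ inverse. Conditioning only on the endpoint $\xi'_\tau=z$ (this is precisely how~\eqref{eq:density} enters) makes $U_n$ \emph{exactly} independent, at every finite $n$, of all the remaining objects assembled into $(V_n,Y_n)$ --- the pre-$\tau(r-\delta)$ and post-$\tau(r+\delta)$ pieces of $\Delta^\n$, the values $\xi_{t_i}$, and the zoomed-in process --- once the window $(\tau(r-\delta),\tau(r+\delta))$ is excised and later sent to zero. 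Lemma~\ref{lemma:XYUZstablelemma} then produces the independent uniform from this exact finite-$n$ decoupling, which is what your asymptotic regularization cannot manufacture. The extension to $d\ge 2$ is an induction via the Markov property at the discretization epoch following $\tau((r_d+r_{d+1})/2)$.
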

The proof of this result is given in~\S\ref{sec:proof} below. This readily yields an extension of \eqref{eq:jointtau} including the variables $\{\tau(r_i)n\}$ and their uniform limits.



\begin{corollary}\label{cor:mainFDD}
Assume \eqref{eq:zoominginlimit} and~\eqref{eq:density}. 
Then for any $x>0$ and $0<t_1<\cdots<t_d$ we have
\[\left(a_n\frac{X_{t_i}-X_{t_i}^\n}{X_{t_i}}\right)_{i=1,\ldots,d}\stably \left(\widehat{\xi^i}_{L(t_ix^{-\a})+U_i}\right)_{i=1,\ldots,d},\]
where $L(\cdot)$ is defined in  \eqref{eq:L}.
\end{corollary}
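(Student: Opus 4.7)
Set $r_i=t_ix^{-\a}$. By \eqref{eq:step1} and \eqref{eq:step2}, combined with Slutsky's lemma, it suffices to establish the joint stable convergence
\[\big(-\widehat\xi^{\n,i}_{[\tau_n(r_i)n]-\tau(r_i)n}\big)_{i=1}^{d}\stably\big(\widehat\xi^i_{L(r_i)+U_i}\big)_{i=1}^{d},\]
where $\widehat\xi^{\n,i}:=a_n(\xi_{\tau(r_i)+t/n}-\xi_{\tau(r_i)})_{t\in\R}$. Using the identity $[a]=a-\{a\}$, I rewrite the index as
\[[\tau_n(r_i)n]-\tau(r_i)n=-n(\tau(r_i)-\tau_n(r_i))-\{\tau_n(r_i)n\}.\]

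The argument of Proposition~\ref{prop:tau}, combined with Theorem~\ref{thm:jointU}, yields the joint stable convergence
\[\big(\widehat\xi^{\n,i},\,n(\tau(r_i)-\tau_n(r_i)),\,\{\tau(r_i)n\}\big)_{i=1}^{d}\stably\big(\widehat\xi^i,\,L(r_i),\,U_i\big)_{i=1}^{d}.\]
To get at $\{\tau_n(r_i)n\}$, observe that it equals $\{\{\tau(r_i)n\}-n(\tau(r_i)-\tau_n(r_i))\}$ by invariance of the fractional part under integer shifts. Because $U_i$ is continuously distributed and independent of $\ff$ (hence of $L(r_i)$), the limiting variable $U_i-L(r_i)$ a.s.\ avoids $\Z$, so the fractional part---continuous off $\Z$---and continuous mapping give $\{\tau_n(r_i)n\}\stably V_i:=\{U_i-L(r_i)\}$ jointly with the other variables. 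Hence
\[[\tau_n(r_i)n]-\tau(r_i)n\stably -L(r_i)-V_i\]
jointly with $\widehat\xi^{\n,i}\stably\widehat\xi^i$ across~$i$.

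A second continuous-mapping step---justified by the fact that a L\'evy process a.s.\ does not jump at a time that is independent of it---then gives $-\widehat\xi^{\n,i}_{[\tau_n(r_i)n]-\tau(r_i)n}\stably-\widehat\xi^i_{-L(r_i)-V_i}$ jointly over~$i$. Since $L(r_i)+V_i$ is independent of $\widehat\xi^i$, applying \eqref{eq:levyR} to both $T=L(r_i)+V_i$ and $T=-(L(r_i)+V_i)$ shows that $-\widehat\xi^i_{-(L(r_i)+V_i)}\overset{d}{=}\widehat\xi^i_{L(r_i)+V_i}$, and this extends to the joint statement over $i$ by independence of the $\widehat\xi^i$.

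It remains to replace $V_i$ by $U_i$. Conditionally on $\ff$ the $L(r_i)$ are constants and $V_i=\{U_i-L(r_i)\}$ is a cyclic shift of a uniform, hence still uniform on $[0,1)$; moreover $(V_i)_i$ is i.i.d.\ and independent of $(\widehat\xi^i)_i$ under this conditioning, precisely as $(U_i)_i$ is. Therefore $(\widehat\xi^i_{L(r_i)+V_i})_{i}$ and $(\widehat\xi^i_{L(r_i)+U_i})_{i}$ share the same conditional law given~$\ff$ and so coincide as stable limits, completing the argument. I expect the fractional-part continuous-mapping step to be the main technical point, since it requires verifying a.s.\ that $U_i-L(r_i)\notin\Z$ and then carefully tracking joint stable convergence through the (discontinuous) fractional-part map; the final identification of the limit combines \eqref{eq:levyR} with the cyclic invariance of the uniform distribution.
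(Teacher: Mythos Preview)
Your argument is essentially the same as the paper's: both decompose $\tau(r_i)n-[\tau_n(r_i)n]$ into $L(r_i)$ plus a fractional part $\{U_i-L(r_i)\}$ (the paper writes this via the identity $a-[b]=\{a\}-[\{a\}-(a-b)]$, you via $[a]=a-\{a\}$ and a separate computation of $\{\tau_n(r_i)n\}$), then note that this fractional part is again uniform and independent of $L(r_i)$, and finally use the time-reversal symmetry of the two-sided $\widehat\xi$ to flip the sign.

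There is one genuine slip in your last paragraph: the claim ``Conditionally on $\ff$ the $L(r_i)$ are constants'' is false. Recall from~\eqref{eq:Delta} and~\eqref{eq:L} that $L(r_i)=-\Delta_{\tau(r_i)}\exp(-\alpha\xi_{\tau(r_i)})$ depends on the auxiliary Brownian motion $W'$ and the uniforms $\kappa_m$, which live on the extension and are \emph{not} $\ff$-measurable. The fix is simple and is exactly what the paper does: condition instead on $(L(r_j))_{j}$ together with the $\widehat\xi^j$ and the underlying $\ff$ (equivalently, on everything except the $U_i$). Given all of this, each $V_i=\{U_i-L(r_i)\}$ is a cyclic shift of an independent uniform, hence uniform and independent of the conditioning $\sigma$-field; this is the content of the paper's sentence ``$U_i'$ are again standard uniforms independent of $L(r_i)$ and the rest (excluding the respective $U_i$).'' With that correction your identification of the stable limit goes through.
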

\begin{proof}
Using the identity
\[a-[b]=\{a\}-[\{a\}-(a-b)]\]
we observe that
\begin{equation}\label{eq:L+U} n(\tau(r_i)-[\tau_n(r_i)n]/n)=\tau(r_i)n-[\tau_n(r_i)n]\stably U_i-[U_i-L(r_i)]=:L(r_i)+U'_i,\end{equation}
because $U_i-L(r_i)$ has no mass at integers. 
It is easy to verify that $U_i'$ are again standard uniforms independent of $L(r_i)$ and the rest (excluding the respective $U_i$). 
Furthermore, jointly with the above we also have the zooming-in limits $\widehat\xi^i$, and so the representations~\eqref{eq:step1} and~\eqref{eq:step2} yield the limit $(-\widehat\xi^i_{-L(r_i)-U_i'})_{i=1,\ldots,d}$.
Now the result follows.
\end{proof}
It is noted that the limiting vector has dependent components and its realization depends on the realization of $\xi$ via~$L$.
Recall that $\widehat\xi^i$ is $1/\beta$-self-similar, which together with~\eqref{eq:levyR} and its independence of the rest yields an alternative representation of the limit components in Corollary~\ref{cor:mainFDD}:
\[{\rm sign}(L(t_ix^{-\a})+U_i)|L(t_ix^{-\a})+U_i|^{1/\beta}\widehat \xi^i_1.\]

Finally, we also have the limit result for the time shift defined in~\eqref{eq:time_perturb}:
\[n(t-T^\n)=x^{\a}n(I_{\tau(tx^{-\a})}-I_{[\tau_n(t x^{-\alpha})n]/n})\stably (L(tx^{-\a})+U)X_t^\a,\] 
by means of~\eqref{eq:L+U}. That is, our procedure yields the samples of $X_t$ up to a time shift of order $n^{-1}$.

\section{Proof of the joint convergence}\label{sec:proof}
Reconsider the definition of stable convergence in~\eqref{eq:stable_def}.
In this paper $Z_n$ is derived from the L\'evy process~$\xi$, and the limit $Z$ is constructed from $\xi$ and some additional random variables independent of~$\ff$.
Thus it is sufficient to take $\sigma(\xi)$-measurable $Y$ in~\eqref{eq:stable_def} to ensure the stable convergence, see also~\cite[p.\ 110]{jacodbook}.
Furthermore, it is sufficient to show
\begin{equation}\label{eq:stable_simple}
(Z_n,\xi_{t_1},\ldots,\xi_{t_k})\convd (Z,\xi_{t_1},\ldots,\xi_{t_k})
\end{equation}
for an arbitrary finite set of times $t_1,\ldots,t_k>0$.  
\subsection{Reinforcement of convergence results}\label{sec:proof1}
This subsection consists of sequential reinforcement of convergence results stated in~\eqref{eq:levyzoomingin} and in Theorem~\ref{thm:jacod}, and culminates with the proof of Theorem~\ref{thm:joint}.
\begin{lemma}\label{lem:stable_taun}
Assume~\eqref{eq:zoominginlimit} and let $\tau_n$ be a sequence of finite stopping times.
Then \[(\widehat\xi^\n_t)_{t\geq 0}:=a_n\Big(\xi_{\tau_n+t/n}-\xi_{\tau_n}\Big)_{t\geq 0}\stably \Big(\widehat\xi_t\Big)_{t\geq 0},\]
where $\widehat\xi$ is independent of~$\ff$. 
\end{lemma}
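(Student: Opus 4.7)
The plan is to exploit the strong Markov property of $\xi$ at the stopping time $\tau_n$ together with the simplified stable convergence criterion~\eqref{eq:stable_simple}. By the strong Markov property, the shifted process $\xi'_s:=\xi_{\tau_n+s}-\xi_{\tau_n}$ is independent of $\mathcal F_{\tau_n}$ and has the law of $\xi$; hence $\widehat\xi^{(n)}=(a_n\xi'_{t/n})_{t\geq 0}$ is independent of $\mathcal F_{\tau_n}$ and equal in law to the zoom-in process $(a_n\xi_{t/n})_{t\geq 0}$, which by~\eqref{eq:levyzoomingin} converges in $J_1$ to $\widehat\xi$. This already gives the marginal convergence $\widehat\xi^{(n)}\convd\widehat\xi$; the task is to upgrade it to stable convergence with $\widehat\xi$ independent of $\mathcal F=\sigma(\xi)$, which by~\eqref{eq:stable_simple} reduces to showing, for any $0<s_1<\cdots<s_k$, the joint convergence
\[\bigl(\widehat\xi^{(n)},\xi_{s_1},\ldots,\xi_{s_k}\bigr)\convd\bigl(\widehat\xi,\xi_{s_1},\ldots,\xi_{s_k}\bigr)\]
with the two blocks independent in the limit.

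Fix a horizon $T>0$; test functionals of $\widehat\xi^{(n)}$ that depend only on the path on $[0,T]$ are functionals of $\xi'$ on the infinitesimal window $[0,T/n]$. I would introduce the $\sigma$-algebra $\mathcal H_n:=\sigma\bigl(\mathcal F_{\tau_n},\,\xi'_u-\xi'_{T/n}:u\geq T/n\bigr)$ and the $\mathcal H_n$-measurable surrogate
\[V_j^{(n)}:=\xi_{s_j\wedge\tau_n}+\ind{s_j>\tau_n+T/n}\bigl(\xi_{s_j}-\xi_{\tau_n+T/n}\bigr),\]
which amounts to $\xi_{s_j}$ with the contribution over the window $[\tau_n,\tau_n+T/n]$ excised. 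A direct case analysis on $\{s_j\le\tau_n\}$, $\{\tau_n<s_j\le\tau_n+T/n\}$ and $\{s_j>\tau_n+T/n\}$ shows that in all three regimes
\[|\xi_{s_j}-V_j^{(n)}|\leq\sup_{u\in[0,T/n]}|\xi'_u|=a_n^{-1}\sup_{t\in[0,T]}|\widehat\xi^{(n)}_t|,\]
and since $a_n\to\infty$ while $\sup_{t\in[0,T]}|\widehat\xi^{(n)}_t|$ is tight by the marginal step, one obtains $V_j^{(n)}\cip\xi_{s_j}$. The essential independence $\widehat\xi^{(n)}|_{[0,T]}\perp\mathcal H_n$ then follows from $\xi'\perp\mathcal F_{\tau_n}$ together with the independent-increments split of $\xi'$ across the level $T/n$.

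Conditioning on $\mathcal H_n$ now yields, for bounded continuous $f,g$,
\[\mathbb E\bigl[f(\widehat\xi^{(n)})\,g(V_1^{(n)},\ldots,V_k^{(n)})\bigr]=\mathbb E\bigl[f(\widehat\xi^{(n)})\bigr]\cdot\mathbb E\bigl[g(V_1^{(n)},\ldots,V_k^{(n)})\bigr],\]
whose right-hand side tends to $\mathbb E[f(\widehat\xi)]\cdot\mathbb E[g(\xi_{s_1},\ldots,\xi_{s_k})]$ as $n\to\infty$; replacing $V_j^{(n)}$ by $\xi_{s_j}$ in the joint expectation costs only $o(1)$ by continuity and boundedness of $g$. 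The main obstacle is precisely the construction of $V_j^{(n)}$: one must cleanly separate the infinitesimal window $[\tau_n,\tau_n+T/n]$ on which $\widehat\xi^{(n)}$ lives from the macroscopic observation times $s_j$, in particular on the possibly non-vanishing event $\{\tau_n<s_j\le\tau_n+T/n\}$ where $s_j$ falls inside this window. The insertion of the intermediate point $\tau_n+T/n$ and the resulting excision of the window contribution from $\xi_{s_j}$ is the key device; once in place, independence of increments and right-continuity of $\xi$ at $0$ do the rest.
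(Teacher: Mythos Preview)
Your proof is correct and follows essentially the same approach as the paper: both restrict to a finite horizon $[0,T]$, invoke the criterion~\eqref{eq:stable_simple}, and decompose each $\xi_{s_j}$ into a part independent of the window $[\tau_n,\tau_n+T/n]$ plus a remainder tending to zero in probability. Your construction of $V_j^{(n)}$ and the $\sigma$-algebra $\mathcal H_n$ is simply a more explicit rendering of the paper's one-line remark ``write $\xi_{t_i}=X_i^{(n)}+Y_i^{(n)}$ with $Y_i^{(n)}$ independent of $\widehat\xi^{(n)}|_{[0,T]}$ and $X_i^{(n)}\cip 0$, achieved by considering independent increments over intervals separated by $\tau_n$, $\tau_n+T/n$ and $t_i$.''
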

\begin{proof}
It is sufficient to consider the process $\widehat\xi^\n$ on some time interval $[0,T]$ jointly with $\xi$ at some times $t_1<\cdots<t_k$, see~\eqref{eq:stable_simple}. 
That is, we need to show that
\[\Big((\widehat \xi^\n_t)_{t\in[0,T]},(\xi_{t_i})_{i=1,\ldots,k}\Big)\convd \Big((\widehat \xi_t)_{t\in[0,T]},(\xi_{t_i})_{i=1,\ldots,k}\Big)\]
with an independent $\widehat \xi_t$.
Write $\xi_{t_i}=X^\n_i+Y^\n_i$, where $Y^\n_i$ are independent of $\widehat \xi^\n_t,t\in[0,T]$ and $X^\n_i\cip 0$, which can be achieved by considering independent increments over time intervals separated by $\tau_n$, $\tau_n+T/n$ and $t_i$.
Now, we may ignore $X^\n_i$, but then the stated convergence is immediate from independence and convergence of marginals. Here we also note that the limit process $\widehat\xi$ does not jump at~$T$ and hence the restrictions converge.
\end{proof}


\begin{lemma}\label{lem:jointzooming}
Assume~\eqref{eq:zoominginlimit} and let $\tau_n$ be a sequence of finite stopping times. It holds as $n\to\infty$ that
\[\Big(\Delta^{\n},\widehat\xi^\n\Big)\stably\Big(\Delta,\widehat\xi\Big),\]
where $\widehat\xi$ is independent of everything else.

Moreover, if $0\leq \tau_n^1<\cdots<\tau_n^d<\infty$ are stopping times for each $n$ and such that $n(\tau_n^{i+1}-\tau_n^{i})\to\infty$ a.s. for all $i=1,\ldots, d-1$ then the multivariate version holds with the corresponding limits $\widehat\xi^i$ being independent copies of $\widehat \xi$, also independent of everything else. 
\end{lemma}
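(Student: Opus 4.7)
The plan is to extend the strategy behind Lemma~\ref{lem:stable_taun} so that $\Delta^\n$ and the marginals $\xi_{t_1},\ldots,\xi_{t_k}$ can be simultaneously split into a part that is exactly independent of $\widehat\xi^\n$ plus a vanishing residual. By the reduction~\eqref{eq:stable_simple}, together with tightness coming from the individual stable convergences in Theorem~\ref{thm:jacod} and Lemma~\ref{lem:stable_taun}, it is enough to show, for arbitrary $S,T>0$ and $t_1,\ldots,t_k\geq 0$,
\[
\bigl((\Delta^\n_s)_{s\leq S},\,(\widehat\xi^\n_u)_{u\leq T},\,\xi_{t_1},\ldots,\xi_{t_k}\bigr)\convd\bigl((\Delta_s)_{s\leq S},\,(\widehat\xi_u)_{u\leq T},\,\xi_{t_1},\ldots,\xi_{t_k}\bigr)
\]
with $\widehat\xi$ independent of the rest. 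The marginals are handled exactly as in Lemma~\ref{lem:stable_taun}: write $\xi_{t_i}=Y_i^\n+X_i^\n$, where $Y_i^\n$ depends only on increments of $\xi$ off the window $W_n:=[\tau_n,\tau_n+T/n]$ and $X_i^\n\cip 0$.

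The new ingredient is an analogous decomposition of $\Delta^\n_s$. Writing
\[
\Delta^\n_s=\sum_{k=1}^{[sn]}\exp(\alpha\xi_{(k-1)/n})\cdot n\int_{(k-1)/n}^{k/n}\bigl(\exp(\alpha(\xi_u-\xi_{(k-1)/n}))-1\bigr)\dd u,
\]
I split the sum according to whether $[(k-1)/n,k/n]$ lies (i) strictly before $W_n$, (ii) intersects $W_n$, or (iii) lies strictly after $W_n$. In case (iii) I factor $\exp(\alpha\xi_{(k-1)/n})=\exp(\alpha\xi_{\tau_n})\,M_n\,\exp(\alpha(\xi_{(k-1)/n}-\xi_{\tau_n+T/n}))$, where $M_n:=\exp(\alpha(\xi_{\tau_n+T/n}-\xi_{\tau_n}))$ isolates all window-dependence. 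Let $\widetilde\Delta^\n_s$ be the sum in (i) plus the sum in (iii) with $M_n$ replaced by~$1$; by independent increments and the strong Markov property applied at $\tau_n$ and $\tau_n+T/n$, the tuple $(\widetilde\Delta^\n,(Y_i^\n)_{i\leq k})$ depends only on $\xi$ off $W_n$, and is therefore exactly independent of $\widehat\xi^\n$ for every $n$.

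The main technical step is $\sup_{s\leq S}|\Delta^\n_s-\widetilde\Delta^\n_s|\cip 0$. Part (ii) contains at most $T+2$ subintervals, each contributing at most $\exp(\alpha\xi_{\tau_n})\cdot\Oh_\p(1/a_n)$ by~\eqref{eq:levyzoomingin}, for a total of $\Oh_\p(T/a_n)$ after a routine localization of $\exp(\alpha\xi_{\tau_n})$. The difference between part (iii) with the original $M_n$ and with $M_n=1$ is $(M_n-1)$ times the post-window contribution, which is $\Oh_\p(1/a_n)\cdot\Oh_\p(1)=\oh_\p(1)$ since $M_n-1=\Oh_\p(1/a_n)$ by~\eqref{eq:zoominginlimit}. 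Combining with Theorem~\ref{thm:jacod} gives $\widetilde\Delta^\n\stably\Delta$, and the exact per-$n$ independence of $(\widetilde\Delta^\n,Y_i^\n)$ from $\widehat\xi^\n$ yields the factorization
\[
\e\bigl[f(\widetilde\Delta^\n,Y_i^\n)g(\widehat\xi^\n)\bigr]=\e\bigl[f(\widetilde\Delta^\n,Y_i^\n)\bigr]\cdot\e\bigl[g(\widehat\xi^\n)\bigr]\longrightarrow\widetilde{\e}\bigl[f(\Delta,\xi_{t_i})\bigr]\cdot\widetilde{\e}\bigl[g(\widehat\xi)\bigr],
\]
which is the claimed joint limit with $\widehat\xi$ independent of everything else, including the Jacod-extension variables $W'$ and $(\kappa_m)$.

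For the multivariate version, the hypothesis $n(\tau_n^{i+1}-\tau_n^i)\to\infty$ forces the windows $W_n^i:=[\tau_n^i,\tau_n^i+T/n]$ to be pairwise disjoint for all large $n$, almost surely. Applying the same decomposition to each window simultaneously (removing all subintervals intersecting some $W_n^i$ and replacing each $M_n^i$ by~$1$) produces $\widetilde\Delta^\n$ and $(Y_i^\n)$ depending only on $\xi$ off $\bigcup_i W_n^i$, hence independent of $(\widehat\xi^{1,\n},\ldots,\widehat\xi^{d,\n})$; the latter are mutually independent by independence of increments of $\xi$ over the disjoint $W_n^i$. The factorization argument above then delivers the joint limit with i.i.d.\ copies of $\widehat\xi$. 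The chief obstacle is the uniform bound on $\Delta^\n-\widetilde\Delta^\n$: one must simultaneously invoke~\eqref{eq:zoominginlimit} and~\eqref{eq:levyzoomingin} to control both the window subintervals and the multiplicative correction $M_n-1$, after localizing $\exp(\alpha\xi_{\tau_n})$; the remaining bookkeeping of exact per-$n$ independence combined with marginal stable limits is then routine.
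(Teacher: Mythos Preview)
Your strategy is essentially the paper's: surgically remove the window $W_n$ from $\Delta^\n$ to manufacture exact per-$n$ independence from $\widehat\xi^\n$, show the surgery is asymptotically negligible, and combine with the marginal stable limits. The paper implements this by replacing $\xi$ with a process $\xi^*$ that is frozen at $\xi_{\tau_n}$ on $[\tau_n,\overline\tau_n]$ (with $\overline\tau_n$ the first grid point past $\tau_n+T/n$) and then continues with the original post-$\overline\tau_n$ increments; $\tilde\Delta^\n$ is the integrated discretization error of~$\xi^*$.

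There is, however, a genuine gap in your bound for case~(ii). The claim that each window-intersecting subinterval contributes $\Oh_\p(1/a_n)$ via~\eqref{eq:levyzoomingin} is valid for those with left endpoint $\geq\tau_n$, but not for the leftmost one, which has $(k-1)/n=[n\tau_n]/n<\tau_n$: its contribution involves the fluctuation $\xi_u-\xi_{[n\tau_n]/n}$ over a random grid cell straddling~$\tau_n$, and forward zooming-in gives no control on the pre-$\tau_n$ portion. Concretely, if $\tau_n$ is the first jump time of $\xi$ of size exceeding~$1$ (a bona fide stopping time under the hypotheses), that cell contains a macroscopic jump and contributes $\Oh_\p(1)$ to $\Delta^\n$, so $\sup_{s\leq S}|\Delta^\n_s-\widetilde\Delta^\n_s|\not\cip 0$. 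The paper's device sidesteps this because $\xi^*=\xi$ on $[0,\tau_n]$, so $\tilde\Delta^\n-\Delta^\n$ depends only on post-$\tau_n$ increments, which by strong Markov form a fresh L\'evy process and are controlled by~\eqref{eq:levyzoomingin}. Your argument is easily repaired by retaining the left-boundary cell in $\widetilde\Delta^\n$ but replacing its post-$\tau_n$ portion of $\xi$ by the constant value $\xi_{\tau_n}$: the modified term is then $\mathcal F_{\tau_n}$-measurable (hence still independent of $\widehat\xi^\n$), and the residual $n\int_{\tau_n}^{[n\tau_n]/n+1/n}\bigl(\exp(\alpha\xi_{\tau_n})-\exp(\alpha\xi_u)\bigr)\,\dd u$ is genuinely $\Oh_\p(1/a_n)$ by forward zooming-in at~$\tau_n$.
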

\begin{proof}
Again we may restrict the processes $\widehat\xi^\n$ to some time interval $[0,T]$.
Let $\overline \tau_n$ be the discretization epoch right after $\tau_n+T/n$, and note that $\overline \tau_n$ is a stopping time independent of $\widehat\xi^\n$.
The idea is to replace $\Delta^{\n}$ with the integrated difference $\tilde\Delta^{\n}$, where the interval $[\tau_n,\overline \tau_n]$ and the respective space increment are ignored.
More precisely, the new $\xi$ is kept constant on $[\tau_n,\overline \tau_n]$ and then it has the original increments.
Observe that $\sup_{t\leq T'}|\tilde\Delta^{\n}_t-\Delta^{\n}_t|=\oh_\p(1)$ using the strong Markov property at $\tau_n$, see also the proof of Proposition~\ref{prop:tau}.
But now the two parts are independent and the arguments from Lemma~\ref{lem:stable_taun} can be repeated, additionally using Theorem~\ref{thm:jacod} for the joint convergence of $\Delta^{\n}$ and $\xi_{t_i}$.

The multivariate version follows the same reasoning. Note, that the intervals $[\tau_n^i,\tau_n^i+T/n]$ do not intersect with probability tending to~1 by assumption. Hence we may assume this property which then yields independent $\widehat\xi^i$.
\end{proof}

\begin{proof}[Proof of Theorem~\ref{thm:joint}]
We use Lemma~\ref{lem:jointzooming}  with $\tau^i_n=\tau(r_i^n-T/n)$ for a fixed $T>0$. Note that
\[n(\tau(r^n_i)-\tau(r^n_i-T/n))\to T\exp(-\a \xi_{\tau(r_i)})=:s_i\quad \text{a.s.},\] 
see also the proof of Proposition~\ref{prop:tau}.
Thus we can add the required time shifts to the limit result, and these limiting shifts $s_i$ are independent of the processes $(\widehat\xi_t^i)_{t\geq 0}$.
But for any $T'>0$ we can choose $T$ large enough so that with arbitrarily large probability $s_i>T'$,
and on this event $(\widehat\xi^i_{s_i+t}-\widehat\xi^i_{s_i})_{t\geq -T'}$ has the law of $(\widehat\xi_t)_{t\geq -T'}$ and is independent of~$s_i$.
\end{proof}
In conclusion, the stopping time $\tau(r)$ has a particular structure allowing to extend zooming in at $\tau(r)$ also to the negative times.

\subsection{Fractional parts and the standard uniform}
Here we prove the joint convergence in Theorem~\ref{thm:jointU} for $d=1$. For the purpose of extending it from $d=1$ to $d\geq 1$ later we need to allow for perturbations in $r$. We state this result as a separate lemma.
\begin{lemma}\label{lem:conv}
Assuming \eqref{eq:zoominginlimit} and~\eqref{eq:density} we have for any $r_n\to r>0$:
\[\Big(\Delta^\n,a_n(\xi_{\tau(r_n)+t/n}-\xi_{\tau(r_n)})_{t\in\R},\{\tau(r_n)n\}\Big)\stably\Big(\Delta,(\widehat \xi_t)_{t\in\R},U\Big),\]
where $U$ is  a standard uniform independent of everything else.
\end{lemma}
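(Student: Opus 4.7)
The plan is to extend the stable convergence already supplied by Theorem~\ref{thm:joint} (applied with $d=1$ and $r^n_1=r_n\to r$) by adjoining the fractional part $\{\tau(r_n)n\}$. That theorem yields
\[(\Delta^\n,\widehat\xi^\n)\stably(\Delta,\widehat\xi),\qquad\widehat\xi\text{ independent of }\mathcal F,\]
so the remaining task is to show that $\{\tau(r_n)n\}$ can be added with an independent standard uniform limit~$U$.

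By the simple form of stable convergence in~\eqref{eq:stable_simple} together with a Fourier decomposition in the fractional-part coordinate, it suffices to prove that for every nonzero integer $k$, bounded continuous $F$, bounded measurable $g$, and times $s_1,\ldots,s_m>0$,
\[\ee\bigl[F(\Delta^\n,\widehat\xi^\n)\,g(\xi_{s_1},\ldots,\xi_{s_m})\,e^{2\pi ikn\tau(r_n)}\bigr]\longrightarrow 0,\]
the case $k=0$ being precisely Theorem~\ref{thm:joint}.

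For this oscillation bound I would apply the reinforcement trick used in the proofs of Lemma~\ref{lem:jointzooming} and Theorem~\ref{thm:joint}. Fix a large auxiliary $T>0$ and set $\tau_n^1:=\tau(r_n-T/n)$, which is a stopping time with $n(\tau(r_n)-\tau_n^1)\to T\exp(-\alpha\xi_{\tau(r)})=:s_T$ a.s., and whose shifted window eventually contains the negative-time support of $\widehat\xi^\n$. Factoring $e^{2\pi ikn\tau(r_n)}=e^{2\pi ikn\tau_n^1}\cdot e^{2\pi ikn(\tau(r_n)-\tau_n^1)}$, the second factor is asymptotically $e^{2\pi iks_T}$, a bounded $\mathcal F$-measurable multiplier. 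The remaining functional decomposes, by the strong Markov property at $\tau_n^1$ and up to $\oh_\p(1)$, into an $\mathcal F_{\tau_n^1}$-measurable factor and an independent post-$\tau_n^1$ piece that converges stably. The task thus reduces to the Kolmogorov-type estimate $\ee[H\cdot e^{2\pi ikn\tau_n^1}]\to 0$ for any uniformly bounded $\mathcal F_{\tau_n^1}$-measurable $H$.

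The Kolmogorov estimate is a Riemann-sum computation against the density of $\tau_n^1$. Assumption~\eqref{eq:density} supplies this density, and after conditioning on $\xi_{\tau_n^1}$ produces a family of conditional densities that varies continuously in~$r$ by the L\'evy structure of $\xi$; this is what makes the oscillation argument run uniformly in $n$ as $r_n-T/n\to r$. Translation invariance of the uniform distribution modulo~$1$ then guarantees that the limiting $U$ is independent of $s_T$, and hence of~$\mathcal F$. I expect the main difficulty to be executing this last step with the required uniformity: mere absolute continuity of $\tau(r)$ for each individual $r$ is not quite sufficient, and it is exactly the joint density in~\eqref{eq:density}, together with its regular dependence on~$r$, that powers the Kolmogorov-style oscillation.
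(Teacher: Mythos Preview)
Your Fourier-decomposition strategy is natural, but the reduction step contains a real gap. You claim the problem reduces to $\mathbb{E}[H\,e^{2\pi ikn\tau_n^1}]\to 0$ for any uniformly bounded $\mathcal{F}_{\tau_n^1}$-measurable~$H$; as stated this is false (take $H=e^{-2\pi ikn\tau_n^1}$, which is $\sigma(\tau_n^1)$-measurable and gives expectation~$1$). The actual $H$ arising from your split genuinely depends on~$n$---it carries, for instance, the pre-$\tau_n^1$ portion of $\Delta^{(n)}$---and the Riemann--Lebesgue argument against the marginal density of $\tau_n^1$ cannot control such moving~$H_n$. Conditioning on $\xi_{\tau_n^1}$ does not rescue this: the entire path up to $\tau_n^1$ determines $\tau_n^1$, so under conditioning on the endpoint alone, $H_n$ and $\tau_n^1$ remain fully dependent. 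What is missing is a piece of $\tau(r_n)$ that is \emph{conditionally independent} of the $n$-dependent functional, and a single cut at $\tau_n^1$ does not manufacture one.

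The paper resolves this by inserting an intermediate buffer segment. One fixes $\delta>0$, restarts at $\tau(r-\delta)$, and lets $\tau$ be the additional time for the fresh process $\xi'$ to accumulate exponential integral~$\delta'$. By assumption~\eqref{eq:density}, $\tau$ has a conditional density given $Z=\xi'_\tau$; by two applications of the strong Markov property, under this conditioning $\tau$ is independent of both the pre-$\tau(r-\delta)$ path and the post-$(\tau(r-\delta)+\tau)$ path. The quantities $Y_n$ (the stopped $\hat\Delta^{(n)}$, the zoomed-in process, the $\xi_{t_i}$, and the post-$\tau(r+\delta)$ contribution) are arranged to live entirely outside this middle segment, so that $U_n=\{\tau n\}$ is conditionally independent of $(V_n,Y_n)$ under~$\mathbb{P}_z$. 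This is exactly the hypothesis of the auxiliary Lemma~\ref{lemma:XYUZstablelemma}, which produces the independent uniform; one then sends $\delta\downarrow 0$ to restore the excised part of~$\Delta^{(n)}$. The idea you are missing is precisely this insertion of a segment whose length carries the density and is conditionally decoupled from everything else in the functional.
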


The independent uniform will arise via the following lemma.
Consider a random variable $Z$ and a sequence of random variables $(U_n,V_n,Y_n)$ defined on the same probability space. 
Let $\pp_z$ be the regular conditional distribution $\Prob{\cdot\given Z=z}$.
\begin{lemma}\label{lemma:XYUZstablelemma}
	Assume that $Y_n\convd Y$, and that for each $z$ (in the support of $Z$) we have under $\pp_z$:
	\begin{itemize}
		\item $U_n$ is independent of $(V_n,Y_n)$ for each $n$,
		\item The distribution of $U_n$ has no atoms and converges weakly to the standard uniform distribution.
	\end{itemize}
	Then $(Y_n,\{U_n+V_n\})\convd (Y,U)$ with a standard uniform $U$ independent of $Y$.
\end{lemma}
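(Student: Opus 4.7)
The plan is to prove joint convergence on $\R\times\mathbb{T}$, where $\mathbb{T}=\R/\Z$ is the circle on which $\{U_n+V_n\}$ naturally lives. The $\mathbb{T}$-valued coordinate is automatically tight (the circle is compact), and by Stone--Weierstrass the characters $u\mapsto e^{2\pi i k u}$, $k\in\Z$, span a dense subspace of $C(\mathbb{T})$. It therefore suffices to verify, for every bounded continuous $f$ on the space where $Y_n$ lives and every $k\in\Z$,
\[
\ee\bigl[f(Y_n)\,e^{2\pi i k\{U_n+V_n\}}\bigr]\;\longrightarrow\; \ee[f(Y)]\cdot \ind{k=0}.
\]
The right-hand side equals $\ee[f(Y)]\cdot\ee[e^{2\pi i k U}]$ for $U\sim\mathrm{Unif}[0,1]$ independent of $Y$, so this single statement identifies both the uniformity of $U$ and its independence of $Y$.

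The case $k=0$ is immediate from $Y_n\convd Y$. For $k\neq 0$ the identity $e^{2\pi i k\{x\}}=e^{2\pi i k x}$ removes the fractional part, after which I would condition on $Z$: writing $\ee_z$ for the expectation under $\pp_z$ and using the hypothesis that $U_n$ is independent of $(V_n,Y_n)$ under $\pp_z$,
\[
\ee_z\bigl[f(Y_n)\,e^{2\pi i k U_n+2\pi i k V_n}\bigr]=\ee_z\bigl[e^{2\pi i k U_n}\bigr]\cdot\ee_z\bigl[f(Y_n)\,e^{2\pi i k V_n}\bigr].
\]
Since $u\mapsto e^{2\pi i k u}$ is bounded continuous and $U_n\convd\mathrm{Unif}[0,1]$ under $\pp_z$, the first factor tends to $\int_0^1 e^{2\pi i k u}\,\dd u=0$ for each $z$ in the support of $Z$, while the second factor is uniformly bounded by $\|f\|_\infty$.

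The only step requiring care is the passage from this $z$-pointwise vanishing back to the unconditional expectation; this is handled by dominated convergence with uniform bound $\|f\|_\infty$ on the $z$-dependent second factor. The no-atoms hypothesis on $U_n$ is not used directly in this argument---it is absorbed into the assumed weak convergence to the atomless uniform law, which is its natural role and is precisely the property being verified in the applications via Koksma-type results.
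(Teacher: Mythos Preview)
Your proof is correct and takes a genuinely different route from the paper's. The paper argues constructively via the probability integral transform: setting $U'_n=F_{n,Z}(U_n)$, where $F_{n,z}$ is the (continuous, by the no-atoms hypothesis) distribution function of $U_n$ under $\pp_z$, makes $U'_n$ exactly uniform and independent of $(V_n,Y_n)$ under each $\pp_z$, so $\{V_n+U'_n\}$ is exactly uniform and independent of $Y_n$; the proof finishes by showing $U'_n-U_n\cip 0$ and hence $\{V_n+U'_n\}-\{V_n+U_n\}\cip 0$. Your Fourier-analytic approach is cleaner: factoring out $\ee_z[e^{2\pi i k U_n}]\to 0$ for $k\neq 0$ kills those terms without ever touching $V_n$ or $Y_n$, and dominated convergence in $z$ is immediate. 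This also shows that the no-atoms assumption on $U_n$ is in fact unnecessary for the conclusion---your final sentence slightly obscures this: the hypothesis is not ``absorbed'' anywhere in your argument, it is simply not used, whereas the paper genuinely needs it for the integral transform to produce an exact uniform. One point to tighten: your ``it therefore suffices'' step is correct, but Stone--Weierstrass alone justifies it only on compact spaces, and the $Y$-space need not be compact. The clean completion is that $(Y_n,\{U_n+V_n\})$ is tight, and for any subsequential limit $(Y',W')$ your computation yields $\ee[e^{2\pi i k W'}\mid Y']=0$ a.s.\ for all $k\neq 0$, forcing $W'$ to be uniform and independent of~$Y'$.
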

\begin{proof}
	Let $F_{n,z}$ be the continuous distribution function of $U_n|Z=z$. Define \[U'_n=F_{n,Z}(U_n)\] and note that, given $Z=z$, $U'_n$ is a standard uniform independent of $(V_n,Y_n)$. 
	Note that
	\[\p(Y_n\in B,\{V_n+U'_n\}\leq u)=u\p(Y_n\in B)\]
	by conditioning on $Z,Y_n,V_n$ and the fact that $\{v+U\}$ is standard uniform. Hence 
		\[(Y_n,\{V_n+U'_n\})\convd (Y,U)\] with $Y$ and $U$ independent.
		
	It is left to show that 
	\begin{equation}\label{eq:diff}\{V_n+U'_n\}-\{V_n+U_n\}\cip 0.\end{equation} 	
	Since $F_{n,z}(x)-x\to 0$ and the convergence is necessarily uniform in $x$, we see that $U_n'-U_n\cip 0$.
	Hence for any small $\delta>0$ we have $|U_n'-U_n|<\delta$ with probability at least $1-\delta$ for large~$n$.
	Moreover, $\p(\{V_n+U'_n\}<\delta)=\p(\{V_n+U'_n\}>1-\delta)=\delta$ and thus ~\eqref{eq:diff} does not exceed $\delta$ in absolute value with probability at least $1-3\delta$.
\end{proof}

\begin{proof}[Proof of Lemma~\ref{lem:conv}]
Choose $0<\delta<r$ and $\delta'>0$, and consider the process $\xi'_t=\xi_{\tau(r-\delta)+t}-\xi_{\tau(r-\delta)}$ independent of $\ff_{\tau(r-\delta)}$. Let $\tau$ be the time such that \[\int_0^\tau\exp(\a \xi'_t)\dd t=\delta'.\]
We consider the regular conditional distribution $\p_z$ corresponding to conditioning on $\xi'_\tau=z$.
Note that for every $z$ the variable $\tau$ has a density under $\p_z$ according to the assumption~\eqref{eq:density}, and so the distribution of $U_n=\{\tau n\}$ has no atoms and it converges weakly to a standard uniform law. 
Furthermore, 
\[(I_{\tau(r-\delta)+\tau},\xi_{\tau(r-\delta)+\tau})=(r-\delta+\exp(\a\xi_{\tau(r-\delta)})\delta',\xi_{\tau(r-\delta)}+z),\]
and we assume that the first component is smaller than $r_n\wedge(r-\delta/2)$; we may do so since this is true for small enough $\delta'$ with arbitrarily high probability.
Now letting $R_n=\tau(r_n)-(\tau(r-\delta)+\tau)$ be the remaining time, we note the decomposition of the fractional part of interest:
\[\{\tau(r_n)n\}=\{(\tau(r-\delta)+R_n)n+\{\tau n\}\}=:\{V_n+U_n\},\]
where $U_n$ is independent of $V_n$ under $\p_z$.

Next, we define the quantities of interest, which will be assembled into $Y_n$.
The integrated difference process stopped at $\tau(r-\delta)$ is denoted by $\hat\Delta^{\n}$. We consider this quantity jointly with $\xi_{t_i}\ind{t_i<\tau(r-\delta)}$ for some fixed times $t_i,i\leq k$.
Furthermore, consider the epoch $\tau_n$ following $\tau(r+\delta)$ with the corresponding incremental process $\tilde\xi_t=\xi_{\tau_n+t}-\xi_{\tau_n}$, which is independent of $\ff_{\tau(r+\delta)}$.
The integrated difference process for the times $\tau_n+t,t\geq 0$  is given by $\exp(\a\xi_{\tau(r+\delta)})(1+\oh_\p(1))\tilde\Delta^{\n}$, which is our second object of interest.
It is considered jointly with $\xi_{\tau(r+\delta)+\tilde t_i}=\xi_{\tau(r+\delta)}+\tilde \xi_{\tilde t_i}+\oh_\p(1)$ for some fixed $\tilde t_i,i\leq \tilde k$.
Thirdly, we consider the zoomed-in process $\widehat\xi^\n_t=a_n(\xi_{\tau(r_n)+t/n}-\xi_{\tau(r_n)})$ for $t\in[-T,T]$.
The event where $\tau(r+\delta)>\tau(r_n)+T/n$ and $\tau(r-\delta/2)<\tau(r_n)-T/n$ occurs with arbitrarily high probability, and we assume these inequalities in the following.
The above objects form the random quantity 
\[Y_n=\Big(\hat\Delta^{\n},(\xi_{t_i}\ind{t_i<\tau(r-\delta)})_{i=1,\ldots,k},\xi_{\tau(r+\delta)},\tilde\Delta^{\n},(\tilde \xi_{\tilde t_i})_{i=1,\ldots,\tilde k},\widehat\xi^\n\Big),\]
 and as required in Lemma~\ref{lemma:XYUZstablelemma} the variable $U_n$ is independent of $(V_n,Y_n)$ and the above events under $\p_z$.

Observe that the quantities $\tilde\Delta^\n,\tilde\xi_{\tilde t_i}$ are independent of the rest and have a joint weak limit as given by Theorem~\ref{thm:jacod}.
But the rest converges according to Theorem~\ref{thm:joint}, where stopping at $\tau(r-\delta)$ requires that $\xi$ does not jump at this time, which is indeed true. 
Thus Lemma~\ref{lemma:XYUZstablelemma} yields $(Y_n,\{\tau(r)n\})\convd (Y,U)$ with an independent standard uniform $U$ and obvious $Y$ for any given $\delta>0$. 

Finally, we piece together different components to get the integrated difference processes with the time interval $(\tau(r-\delta),\tau(r+\delta))$ excluded, as well as the corresponding limiting expression, see also~\eqref{eq:Delta}.
Now we can take $\delta\downarrow 0$ using \cite[Prop.\ 2.2.4]{jacodbook} to get
\begin{align*}
&\Big(\Delta^{\n},(\xi_{t_i}\ind{t_i<\tau(r)})_{i=1,\ldots,k},(\xi_{\tau(r)+\tilde t_i})_{i=1,\ldots,k},\widehat\xi^\n,\{\tau(r_n n)\}\Big)\convd\\
&\qquad\Big(\Delta,(\xi_{t_i}\ind{t_i<\tau(r)})_{i=1,\ldots,k},(\xi_{\tau(r)+\tilde t_i})_{i=1,\ldots,k},\widehat\xi,U\Big).
\end{align*}
It is only required to verify the assumptions of~\cite[Prop.\ 2.2.4]{jacodbook}.
Firstly, the limits converge a.s.\ as $\delta\downarrow 0$, because $\tau(r\pm\delta)\to \tau(r)$ and the process $\xi$ is continuous at $\tau(r)$ and at $\tau(r)+\tilde t_i$. 
Secondly, we must show that the excluded integrated difference is uniformly negligible:
\begin{equation*}
	\lim_{\delta\downarrow0}\limsup_{n\to\infty}\p(\sup_{t\leq \lceil\tau(r+\delta)n\rceil/n}|n\int_{[\tau(r-\delta)n]/n}^{[tn]/n}(\exp(\a\xi_s)-\exp(\a\xi^\n_s))\dd s|\geq\epsilon)=0.
\end{equation*}
But the respective quantity converges weakly according to Theorem~\ref{thm:jacod}, and the limit goes to 0 a.s.\ establishing this claim.
The proof is now complete.
\end{proof}

\subsection{Extension to multivariate case}
Let us recall a basic result, which readily follows from Skorokhod's representation theorem.
Assume that $\mu_n$ is a sequence of finite measures converging weakly to a finite measure $\mu$ and that $f_n$ is a sequences of bounded functions that are continuously convergent, i.e. $f_n(z_n)\to f(z)$ whenever $z_n\to z$ for $z$ in the support of $\mu$.
Then we also have
\[\int f_n\dd\mu_n\to \int f\dd\mu.\]

\begin{proof}[Proof of Theorem~\ref{thm:jointU}]
We prove the multivariate case inductively. Suppose the case $d\geq 1$ is proven.
Consider $r=(r_d+r_{d+1})/2$, and let $\tau_n=\lceil \tau(r)n\rceil/n$ be the epoch following $\tau(r)$ which is a stopping time. Note also that $\tau_n\to\tau(r)$ and  $r^\n=I_{\tau_n}\to r$ a.s.  We condition on $\xi_{\tau_n}=x$ and $r^\n-r=\epsilon$ and use the strong Markov property to split the quantities of interest.
The processes $\Delta^\n$ are split into two parts: the one stopped at $\tau_n$ and the post-$\tau_n$ contribution. The latter corresponds to $\exp(\a x)\tilde\Delta^{\n}$ for the process $\tilde \xi_t=\xi_{\tau_n+t}-\xi_{\tau_n}$ which is independent of~$\ff_{\tau_n}$.
Moreover, note that 
\[\tau(r_{d+1})=\tau_n+\tilde\tau(\exp(-\a x)(r_{d+1}-r-\epsilon))\]
and zooming in at $\tau(r_{d+1})$ translates into zooming in on $\tilde \xi$ at the respective time, whereas
\[\{\tau(r_{d+1})n\}=\{\tilde\tau(\exp(-\a x)(r_{d+1}-r-\epsilon))n\}.\]
Finally, we may assume that none of the zoomed-in processes over $[-T,T]$ span both $[0,\tau_n]$ and $(\tau_n,\infty)$ since this is true with arbitrarily large probability for large enough~$n$. This allows to split the variables of interest into two independent groups under the conditional law specified above.

Next, we construct the measures $\mu_n(\dd x,\dd \epsilon)$ and the functions $f_n$ by simply applying bounded continuous functions to the two quantities of interest, where the latter also include $\tilde\xi_{\tilde t_i}$ needed to guarantee the stable convergence.
Weak convergence of measures follows from the inductive assumption and the facts that $\tau_n\to\tau(r),r^\n\to r$ and $\xi$ is continuous at $\tau(r)$. 
Convergence of $f_n(x_n,\epsilon_n)$ for $(x_n,\epsilon_n)\to(x,0)$ follows from Lemma~\ref{lem:conv} with $r_n\to r$ given by
\[\exp(-\a x_n)(r_{d+1}-r-\epsilon_n)\to \exp(-\a x)(r_{d+1}-r).\] 
It is left to glue back the limits, where the only dependence comes from $x$ needed to reconstruct the process $(\Delta_t)_{\geq 0}$ and the variables $\xi_{\tau(r)+\tilde t_i}$.
Finally, note that convergence still holds when $\xi_{\tau_n+\tilde t_i}$ are replaced by $\xi_{\tau(r)+\tilde t_i}$ in the pre-limit. This yields the stated stable convergence for $d+1$, and the proof is complete. 
\end{proof}

\section{Zooming-in on pssMp}\label{sec:zooming}
\subsection{The result}
Self-similarity of $X$ implies that $n^{1/\a}X_{t/n}$ (with $X$ starting at $x$) has the law of $X_t$ (starting at $xn^{1/\a}$).
There is, however, a different scaling resulting in a limit process as $n\to\infty$, which we now state. 
Importantly, it provides a zooming-in limit for the pssMp $X$ and connects it to the zooming-in limit for~$\xi$.
It is noted that this result does not require the assumption~\eqref{eq:limsup}.
Furthermore, this result is somewhat related to the law of iterated logarithm for $X_t$ at small times, see~\cite[Thm.\ 7.1]{lamperti72} and~\cite[\S2.3]{rivero_survey}.
\begin{theorem}\label{thm:ppsMpzooming}
Under the assumption~\eqref{eq:zoominginlimit} there is the convergence for any $x>0$
\begin{equation}\label{eq:pssMpzooming}a_n(X_{t/n}-x)_{t\geq 0}\stably x^{1-\a/\beta}(\widehat\xi_t)_{t\geq 0},\qquad \R\ni n\to\infty,\end{equation}
where $\widehat\xi$ is independent of $\ff$ and $1/\beta$ is its Hurst index.

Furthermore,~\eqref{eq:zoominginlimit} is equivalent to the weak convergence of $a_n(X_{1/n}-1)$ to a non-zero limit for $x=1$.
\end{theorem}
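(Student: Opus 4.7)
The plan is to reduce the claim to the zooming-in limit for $\xi$ via the Lamperti representation. From $X_{t/n}=x\exp(\xi_{\tau(tx^{-\alpha}/n)})$, a Taylor expansion of the exponential (whose argument tends to $0$ locally uniformly) gives
\[a_n(X_{t/n}-x)=x\cdot a_n\,\xi_{\tau(tx^{-\alpha}/n)}+\oh_\p(1)\]
locally uniformly in $t$. Hence the task reduces to identifying the stable limit of $t\mapsto a_n\,\xi_{\tau(tx^{-\alpha}/n)}$.

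The key technical step is the deterministic time-change limit $n\tau(u/n)\to u$ locally uniformly a.s., which is equivalent to $nI_{u/n}=\int_0^u\exp(\alpha\xi_{v/n})\dd v\to u$ and follows from right-continuity of $\xi$ at $0$ via dominated convergence. Combining this with the stable convergence $(a_n\xi_{u/n})_{u\geq 0}\stably(\widehat\xi_u)_{u\geq 0}$---obtained from Lemma~\ref{lem:stable_taun} applied at the stopping time $\tau_n\equiv 0$ and yielding $\widehat\xi$ independent of $\ff$---and using that $\widehat\xi$ is a.s.\ continuous at each fixed time $tx^{-\alpha}$, the Skorokhod composition theorem (whose applicability rests on the inner limit being the identity map) gives
\[\bigl(a_n\,\xi_{\tau(tx^{-\alpha}/n)}\bigr)_{t\geq 0}\stably\bigl(\widehat\xi_{tx^{-\alpha}}\bigr)_{t\geq 0}.\]
Finally, $1/\beta$-self-similarity of $\widehat\xi$ identifies this limit in law with $(x^{-\alpha/\beta}\widehat\xi_t)_{t\geq 0}$, and multiplying by $x$ yields the claim.

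The forward direction of the equivalence is immediate by setting $x=t=1$. For the converse, assume $a_n(X_{1/n}-1)\convd Z$ with $Z\neq 0$. Since $X_{1/n}=\exp(\xi_{\tau(1/n)})$ and $X_{1/n}\to 1$ in probability, Taylor yields $a_n\xi_{\tau(1/n)}\convd Z'\neq 0$. Using $n\tau(1/n)\to 1$ a.s.\ (from $I_r/r\to 1$ as $r\downarrow 0$), the strong Markov property at $\tau(1/n)\wedge(1/n)$ bounds $\xi_{1/n}-\xi_{\tau(1/n)}$ by an increment of an independent copy of $\xi$ over a time of order $\oh(1/n)$; a tightness argument shows that this difference, scaled by $a_n$, vanishes in probability, giving $a_n\xi_{1/n}\convd Z'$, which is~\eqref{eq:zoominginlimit}.

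I expect the main obstacle to be the composition step in the forward direction: rigorously justifying continuity of composition in $J_1$ at the pair $\bigl((\widehat\xi_u)_{u\geq 0},(u)_{u\geq 0}\bigr)$, since the pre-limit inner time-change is random and $\widehat\xi$ is typically discontinuous. This works only because the limiting inner time-change is the identity (continuous and strictly increasing) and $\widehat\xi$ has no fixed-time jumps. The converse direction is delicate for a related reason: controlling the increment of $\xi$ over the short random interval between $\tau(1/n)$ and $1/n$ requires either an a priori regularity statement about the scaling function $a_n$ or a subsequence-extraction argument of the usual type for ``convergence of types''.
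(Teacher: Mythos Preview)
Your forward direction is essentially the paper's proof: linearize the exponential, show $n\tau(u/n)\to u$ locally uniformly, and compose with the stable zooming-in limit for~$\xi$ using continuity of subordination at the identity time-change. Your treatment there is fine.

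The converse direction has a genuine gap. You reduce to $a_n\xi_{\tau(1/n)}\convd Z$ with $n\tau(1/n)\to 1$, and then assert that $a_n(\xi_{1/n}-\xi_{\tau(1/n)})\cip 0$ by ``a tightness argument'', bounding the increment by one of an independent copy over a time of order $\oh(1/n)$. But to conclude that $a_n$ times an increment of $\xi$ over a vanishing fraction of $1/n$ goes to zero, you would need precisely the kind of regularity (tightness of $a_n\xi_{1/n}$, or regular variation of $a_n$) that follows from~\eqref{eq:zoominginlimit}---which is what you are trying to establish. Neither of your suggested fixes closes this: an a~priori regularity statement about $a_n$ is circular, and ``convergence of types'' does not apply here since the issue is random versus deterministic times for a single sequence, not two competing affine normalizations.

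The paper handles this with a self-contained result of independent interest, Proposition~\ref{prop:conv}: if $\xi^n$ are L\'evy processes and $\xi^n_{T_n}\convd Z$ for random times $T_n\cip 1$, then $\xi^n_1\convd Z$. Its proof is not a routine tightness argument. First one shows (Lemma~\ref{lem:levy_cip0_bounded}) that if $\xi^n_{T_n}\cip 0$ and $\p(\sup_{t\leq 1}|\xi^n_t|>1)\leq 1/2$ then $\xi^n_1\cip 0$, via a contradiction exploiting the strong Markov property and iterated first-passage over small levels. A rescaling trick (Lemma~\ref{lem:levy_cip0}) removes the boundedness assumption. Finally, applying this to $h_n\xi^n$ for arbitrary $h_n\to 0$ yields tightness of $\xi^n_1$, after which subsequence extraction and process-level convergence of L\'evy processes identify the limit as~$Z$. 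This is the missing ingredient in your sketch.
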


%
\begin{proof}
For all $t\in[0,T]$ we have 
\[a_n(X_{t/n}-x)=xa_n\xi_{\tau(x^{-\a}t/n)}(1+\oh_\p(1)),\]
where the $\oh_\p(1)$ term depends on $T$ and not on~$t$.
It is left to show that
\begin{equation}\label{eq:neg_timechange}\sup_{t\leq T}|\tau(x^{-\a}t/n)n-x^{-\a}t|\cip 0,\end{equation}
since then by continuity of subordination~\cite[Thm.\ 13.2.2]{whitt} at the limiting time change $x^{-\a}t$, and $a_n\xi_{\cdot/n}\stably \widehat \xi$ we find 
\[(a_n\xi_{\tau(x^{-\a}t/n)})_{t\leq T}\stably (\widehat \xi_{x^{-\a}t})_{t\leq T}\stackrel{d}{=} x^{-\a/\beta}(\widehat \xi_{t})_{t\leq T}.\]
Finally,
\[t/n-\tau(t/n)=\int_0^{\tau(t/n)} (e^{\a\xi_s}-1)\dd s = \tau(t/n)\oh_\p(1),\]
which firstly shows that $n\tau(x^{-\a}T/n)$ is stochastically bounded and thus also establishes~\eqref{eq:neg_timechange}.

Next, assume that $a_n(X_{1/n}-1)\convd Z\neq 0$ for $x=1$. Then
\[a_n\xi_{\tau(1/n)}=a_n(e^{\xi_{\tau(1/n)}}-1)(1+\oh_\p(1))\convd Z.\]
But $\tau(1/n)n\cip 1$ and according to Proposition~\ref{prop:conv} below we must have \[a_n\xi_{1/n}\convd Z.\]
The proof is now complete.
\end{proof}

Let us note that the non-zero weak limit of $a_n(X_{1/n}-1)$, when it exists, is necessarily $\widehat\xi_1$.
In fact, this assumption is equivalent to a seemingly weaker assumption, namely that $a_n(X_{t/n}-x)\convd Z\neq 0$ for some $t,x>0$.
Importantly, Theorem~\ref{thm:ppsMpzooming} allows to identify $\widehat\xi$ directly without determining the corresponding process $\xi$ first.
An application of this will be given in  \S\ref{sec:application} below.

\subsection{On convergence of L\'evy processes at random times}
The following basic result is essential for the second statement in Theorem~\ref{thm:ppsMpzooming}, and it maybe useful in various other settings.
Somewhat surprisingly, it is not contained in the standard monographs.
\begin{proposition}\label{prop:conv}
Consider a sequence of L\'evy processes $\xi^n$ and assume that $\xi^n_{T_n}\convd Z$ for some random times $0\leq T_n\cip 1$.
Then $\xi^n_1\convd Z$.
\end{proposition}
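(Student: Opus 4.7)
The approach is via Slutsky's theorem: since $\xi^n_{T_n}\convd Z$, it suffices to show $\xi^n_1-\xi^n_{T_n}\cip 0$. To represent this increment concretely, I would form, as in~\eqref{eq:levyR}, the two-sided Lévy process $\widehat\xi^n_s=\xi^n_{1+s}-\xi^n_1$ (extended to $s<0$ by the backward construction, which shares the law of the forward one). Then $\xi^n_{T_n}-\xi^n_1=\widehat\xi^n_{T_n-1}$, and the task reduces to $\widehat\xi^n_{T_n-1}\cip 0$ given $T_n-1\cip 0$.

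For any $\delta,\epsilon>0$, the union bound
\[
	\p(|\widehat\xi^n_{T_n-1}|>\delta)\leq \p(|T_n-1|>\epsilon)+\p\Big(\sup_{|s|\leq\epsilon}|\widehat\xi^n_s|>\delta\Big)
\]
has its first summand vanishing by hypothesis. Splitting the supremum at $s=0$ into a forward and backward half, and applying the Ottaviani--Skorokhod maximal inequality to each Lévy piece, the second summand is dominated by a constant multiple of $\p(|\xi^n_\epsilon|>\delta')$ for some $\delta'>0$.

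The main obstacle is the uniform-in-$n$ control of $\p(|\xi^n_\epsilon|>\delta')$ as $\epsilon\to 0$. Via the semigroup relation $\phi_{\xi^n_\epsilon}(\theta)=\phi_{\xi^n_1}(\theta)^\epsilon$, this reduces, by Lévy's continuity theorem, to uniform tightness of $\{\xi^n_1\}_n$. I would establish tightness by contradiction: untightness along a subsequence with $\p(|\xi^{n_k}_1|>M_k)\geq\eta$ and $M_k\to\infty$, combined with the oscillation bound above and $T_{n_k}\cip 1$, would force $\p(|\xi^{n_k}_{T_{n_k}}|>M_k/2)\geq\eta/2$ for large $k$, contradicting tightness of $\{\xi^n_{T_n}\}$. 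Once tightness is in hand, the limit is identified cleanly via Skorokhod's representation: for Lévy processes, marginal convergence $\xi^{n_k}_1\convd Y_1$ on a subsequence lifts to functional convergence $\xi^{n_k}\Rightarrow Y$ in the Skorokhod $J_1$-topology on $D[0,2]$; realising $\xi^{n_k}\to Y$ and $T_{n_k}\to 1$ a.s.\ on a common space, the a.s.\ continuity of the limiting Lévy process $Y$ at the deterministic time $1$ yields $\xi^{n_k}_{T_{n_k}}\to Y_1$ a.s., and hence $Y_1\stackrel{d}{=}Z$. Since every convergent subsequence of $\{\xi^n_1\}_n$ has the same limit, the full sequence converges to $Z$.
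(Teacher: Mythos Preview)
Your overall architecture matches the paper's: reduce the problem to tightness of $\xi^n_1$, then identify the limit along subsequences via functional convergence of L\'evy processes and continuity of the limiting process at time~$1$. The final paragraph is essentially the paper's argument (the paper also uses that $T_{n_k}\cip 1$ is convergence to a constant, so joint convergence with $\xi^{n_k}$ follows from Slutsky and the continuous mapping theorem applies).

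The gap is in your tightness-by-contradiction step. You claim that $\p(|\xi^{n_k}_1|>M_k)\geq\eta$ together with the oscillation bound forces $\p(|\xi^{n_k}_{T_{n_k}}|>M_k/2)\geq\eta/2$. But this would require $\p(|\xi^{n_k}_{T_{n_k}}-\xi^{n_k}_1|>M_k/2)\leq\eta/2$, and your Ottaviani--Skorokhod bound only reduces this to control of $\p(|\xi^{n_k}_\epsilon|>cM_k)$. Without tightness of $\xi^{n_k}_1$ you have no uniform control over this quantity; the untightness assumption gives you a \emph{lower} bound on tails, not the upper bound you need. Choosing $\epsilon=\epsilon_k$ small enough to make $\p(|\xi^{n_k}_{\epsilon_k}|>cM_k)$ small is possible for each fixed $k$ by right-continuity, but then you also need $\p(|T_{n_k}-1|>\epsilon_k)$ small, and nothing in the hypotheses links the rate $T_{n_k}\to 1$ to the scale of fluctuations of $\xi^{n_k}$. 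The argument, as written, is circular.

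The paper handles exactly this difficulty via Lemmas~\ref{lem:levy_cip0_bounded} and~\ref{lem:levy_cip0}. It first treats the case $Z=0$ under the auxiliary assumption $\p(\sup_{t\leq 1}|\xi^n_t|>1)\leq 1/2$: if $\xi^n_{1-\delta}$ stayed bounded away from~$0$ with positive probability, then by independence of increments the post-$(1-\delta)$ process would have to make an excursion of fixed size in time~$2\delta$ with probability tending to~$1$; iterating via the strong Markov property forces $\p(\sup_{t\leq 1}|\xi^n_t|>1)\to 1$, contradicting the assumption. A rescaling trick then removes the auxiliary assumption, and finally tightness of $\xi^n_1$ follows by applying the $Z=0$ case to $h_n\xi^n$ for arbitrary $h_n\downarrow 0$. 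This strong Markov iteration is the substantive idea missing from your sketch.
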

Importantly, we do not assume that $\xi^n$ and $T_n$ are independent. 
The main difficulty is in proving that $\xi^n_1$ is tight, which is the content of the following two Lemmas.
\begin{lemma}\label{lem:levy_cip0_bounded}
Assume that $0\leq T_n\cip 1$ and $\xi^n_{T_n}\cip 0$ for a sequence of L\'evy processes $\xi^n$ such that
\begin{equation}\label{eq:assumption}\p(\sup_{t\leq 1}|\xi^n_t|>1)\leq 1/2.\end{equation}
Then $\xi^n_1\cip 0$.
\end{lemma}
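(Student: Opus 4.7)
The plan is a two-step argument: first establish tightness of $\xi^n_1$, then show that every subsequential weak limit of $\xi^n_1$ must vanish, using the classical functional weak-convergence theorem for L\'evy processes to bridge $\xi^n_{T_n}$ and $\xi^n_1$.

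For the tightness step, partition $[0,1]$ into $m$ equal subintervals and set $M^n_k := \sup_{s \in [(k-1)/m,k/m]}|\xi^n_s - \xi^n_{(k-1)/m}|$. By stationary independent increments the $M^n_k$ are i.i.d.\ with common law $\sup_{s\leq 1/m}|\xi^n_s|$, and clearly $\max_k M^n_k \leq 2\sup_{t\leq 1}|\xi^n_t|$. Hypothesis~\eqref{eq:assumption} therefore forces $(1-p_{n,m})^m \geq 1/2$ with $p_{n,m} := \p(\sup_{s\leq 1/m}|\xi^n_s| > 2)$, giving the uniform bound $p_{n,m} \leq 1 - 2^{-1/m} \leq (\log 2)/m$. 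A further application of stationary increments shows that $V^n_\delta := \sup_{|s-1|\leq \delta}|\xi^n_s - \xi^n_1|$ is stochastically dominated by $2\sup_{s\leq 2\delta}|\xi^n_s|$, so $\p(V^n_\delta > 4) \leq 4\delta \log 2$ for small $\delta$, uniformly in $n$. On $\{|T_n - 1| \leq \delta\}$ we have $|\xi^n_1 - \xi^n_{T_n}| \leq V^n_\delta$, whence the triangle inequality yields
\[\p(|\xi^n_1| > 5) \leq \p(|\xi^n_{T_n}| > 1) + \p(|T_n - 1| > \delta) + 4\delta\log 2,\]
and letting $n\to\infty$ and then $\delta\to 0$ proves tightness of $\xi^n_1$.

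Now assume for contradiction that $\xi^n_1$ does not tend to zero in probability; by tightness there is a subsequence $\xi^{n_k}_1 \convd Z$ with $\p(Z\neq 0)>0$. Since $Z$ is infinitely divisible we may write $Z \stackrel{d}{=} \xi_1$ for some L\'evy process $\xi$, and pointwise convergence of the characteristic exponents extends to functional convergence $\xi^{n_k} \convd \xi$ in $D[0,\infty)$ with the $J_1$ topology, by the classical weak-convergence theorem for L\'evy processes. Combining with $T_{n_k} \cip 1$ (a constant, so joint convergence follows) and the almost-sure continuity of $\xi$ at the fixed time $1$, the continuous mapping theorem applied to the evaluation map $(f,t)\mapsto f(t)$ gives $\xi^{n_k}_{T_{n_k}} \convd \xi_1 \stackrel{d}{=} Z$; but $\xi^{n_k}_{T_{n_k}} \cip 0$ by hypothesis, so $Z = 0$ in distribution, a contradiction. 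The main obstacle is the functional convergence step for L\'evy processes, which I would treat as a standard black box; if desired it can be bypassed by combining the same estimate on $V^n_\delta$ with the marginal convergence $\xi^{n_k}_s \convd \xi_s$ and the c\`adl\`ag regularity $\sup_{s \leq \delta}|\xi_s| \to 0$ a.s.\ as $\delta \to 0$ to show $\xi^{n_k}_{T_{n_k}} - \xi^{n_k}_1 \cip 0$ directly along the subsequence.
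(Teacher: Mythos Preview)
Your proof is correct but takes a genuinely different route from the paper's. The paper gives an elementary, self-contained argument: assuming for contradiction that $\p(\xi^n_{1-\delta}<-2\epsilon)$ stays bounded away from zero along a subsequence, it compares this with the event $\{\xi^n_{T_n}>-\epsilon,\,|T_n-1|<\delta\}$ to force $\p(\overline\xi^n_{2\delta}>\epsilon)\to 1$, and then iterates the strong Markov property at successive first-passage times over $\epsilon$ to get $\p(\overline\xi^n_1>1)\to 1$, contradicting~\eqref{eq:assumption}. It then reassembles $\xi^n_1$ from $\xi^n_{1-\delta}$ and an independent increment. No weak-convergence theory is used.

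Your argument instead extracts tightness of $\xi^n_1$ from~\eqref{eq:assumption} via the partition estimate $p_{n,m}\leq(\log 2)/m$ and the bound on $V^n_\delta$, and then along a weakly convergent subsequence invokes the standard fact that $\xi^{n_k}_1\convd\xi_1$ upgrades to $\xi^{n_k}\convd\xi$ in $D$, so that continuous mapping at the a.s.\ continuity point $t=1$ of $\xi$ identifies the limit as zero. This is precisely the strategy the paper reserves for the more general Proposition~\ref{prop:conv}; the paper's layered structure (Lemma~\ref{lem:levy_cip0_bounded} $\to$ Lemma~\ref{lem:levy_cip0} $\to$ Proposition~\ref{prop:conv}) keeps the first two lemmas free of functional-convergence machinery so that the black box is invoked only once. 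Your route is shorter and conceptually cleaner for the lemma in isolation, at the cost of front-loading the heavy tool; the paper's route is more hands-on but makes the dependence on the L\'evy-process convergence theorem entirely explicit and minimal. One small remark: your closing alternative sketch (bypassing functional convergence via the $V^n_\delta$ estimate and marginal convergence) is vaguer than the main argument, since the $V^n_\delta$ bound you derived controls only the tail beyond level~4, not beyond an arbitrary~$\epsilon$; making it rigorous would still require something like a maximal inequality or, implicitly, the functional convergence you are trying to avoid.
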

\begin{proof}
Suppose for contradiction that there exist $\epsilon,\delta>0$ such that
\[\liminf_{n\to\infty}\p(\xi^n_{1-\delta}<-2\epsilon)>0.\]
Let $\xi'^n_t=\xi^n_{1-\delta+t}-\xi^n_{1-\delta}$ be the incremental post-$(1-\delta)$ process.
Then 
\[\p(\xi^n_{T_n}>-\epsilon,|T_n-1|<\delta,\xi^n_{1-\delta}<-2\epsilon)\leq \p(\xi^n_{1-\delta}<-2\epsilon)\p(\overline \xi'^n_{2\delta}>\epsilon),\]
where on the right-hand side we used independence of $\xi'$ and $\xi^n_{1-\delta}$. By the initial assumption we readily obtain 
\[p_n:=\p(\overline \xi^n_{2\delta}>\epsilon)\to 1\qquad\text{as }n\to\infty.\]
Applying strong Markov property at first passage times we now find
\[1/2\geq \p(\overline \xi^n_1>1)\geq p_n^{\lceil 1/\epsilon\rceil}\]
for all $n$, given that $2\delta\lceil 1/\epsilon\rceil<1$. 
In this case the right-hand side tends to $1$, which is a contradiction.
Similar reasoning works when $\p(\xi^n_{1-\delta}>2\epsilon)$ is assumed to be bounded away from~0.
Thus we conclude that for any $\epsilon>0$ and small enough $\delta>0$ we have
\[\p(|\xi^n_{1-\delta}|>\epsilon)\to 0\qquad\text{as }n\to\infty.\]

Fix arbitrary $h,\epsilon>0$ and choose $\delta$ small so that 
$\p(|\xi^n_{1-2\delta}|<\epsilon)$ and $\p(|\xi^n_{1-\delta}|<\epsilon)$ are larger than $1-h$ for all large $n$.
Thus $\p(|\xi^n_{\delta}|<2\epsilon)>1-2h$ implying that $\p(|\xi^n_1|<3\epsilon)>1-3h$, which completes the proof.
\end{proof}

\begin{lemma}\label{lem:levy_cip0}
The conclusion of Lemma~\ref{lem:levy_cip0_bounded} is true without the assumption~\eqref{eq:assumption}.
\end{lemma}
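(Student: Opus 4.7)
The plan is to reduce to Lemma~\ref{lem:levy_cip0_bounded} via rescaling. For each $n$, set
\[
c_n=\inf\Set{c>0\given \p(\sup_{t\leq 1}|\xi^n_t|>c)\leq 1/2},
\]
which is finite because a càdlàg path on $[0,1]$ is a.s.\ bounded. I would then split into two cases according to whether the sequence $(c_n)$ is bounded.

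If $c_n\leq C$ for all $n$, the L\'evy process $\tilde\xi^n=\xi^n/C$ satisfies $\p(\sup_{t\leq 1}|\tilde\xi^n_t|>1)\leq 1/2$ and $\tilde\xi^n_{T_n}\cip 0$, so Lemma~\ref{lem:levy_cip0_bounded} applies and yields $\tilde\xi^n_1\cip 0$, whence $\xi^n_1\cip 0$. If instead $(c_n)$ is unbounded, I pass to a subsequence with $c_n\to\infty$ and aim for a contradiction. Put $\tilde\xi^n=\xi^n/c_n$. The same reasoning shows that Lemma~\ref{lem:levy_cip0_bounded} applies to $\tilde\xi^n$, giving $\tilde\xi^n_1\cip 0$. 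The pointwise convergence $\e e^{iu\tilde\xi^n_1}\to 1$ is uniform on compacts in $u$ (L\'evy's continuity theorem), and since $\e e^{iu\tilde\xi^n_t}=(\e e^{iu\tilde\xi^n_1})^t$, this yields $\e e^{iu\tilde\xi^n_t}\to 1$ uniformly in $(t,u)\in[0,1]\times[-U,U]$ for every $U>0$. A standard tail bound via characteristic functions then gives $\sup_{t\leq 1}\p(|\tilde\xi^n_t|>a)\to 0$ for each $a>0$, and applying the strong Markov property at $\inf\{t\leq 1:|\tilde\xi^n_t|>2a\}$ (an Ottaviani-type argument) yields
\[
\p(\sup_{t\leq 1}|\tilde\xi^n_t|>2a)\bigl(1-\sup_{t\leq 1}\p(|\tilde\xi^n_t|>a)\bigr)\leq\p(|\tilde\xi^n_1|\geq a).
\]
Taking $a=1/4$ we obtain $\p(\sup_{t\leq 1}|\tilde\xi^n_t|>1/2)\to 0$. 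But the definition of $c_n$ forces $\p(\sup_{t\leq 1}|\tilde\xi^n_t|>1/2)>1/2$ for large $n$ (since $c_n/2<c_n$), a contradiction.

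The main obstacle is the second case: one must upgrade the marginal statement $\tilde\xi^n_1\cip 0$ supplied by Lemma~\ref{lem:levy_cip0_bounded} to the supremum statement $\sup_{t\leq 1}|\tilde\xi^n_t|\cip 0$. For a single L\'evy process the running supremum can substantially exceed the terminal value, so this upgrade is a genuinely sequential phenomenon that relies on combining the uniform-on-compacts convergence of the characteristic functions with an Ottaviani-type maximal inequality. Once this step is in place, the reduction to the bounded case and the derivation of the contradiction are routine.
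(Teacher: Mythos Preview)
Your proof is correct and follows the same strategy as the paper: rescale so that assumption~\eqref{eq:assumption} holds, apply Lemma~\ref{lem:levy_cip0_bounded} to the rescaled process, upgrade the convergence $\tilde\xi^n_1\cip 0$ to $\sup_{t\leq 1}|\tilde\xi^n_t|\cip 0$, and contradict the choice of scaling. The paper's version is slightly slicker in two respects. First, instead of splitting into cases it defines $b_n=\sup\{b\in(0,1]:\p(\sup_{t\leq 1}|b\xi^n_t|>1)\leq 1/2\}$ and shows directly that $b_n=1$ eventually, avoiding the subsequence argument. Second, for the upgrade step the paper simply invokes the standard fact that for L\'evy processes convergence in law at time~$1$ implies process-level convergence in $J_1$ (Kallenberg, Thm.~15.17), after which the continuous mapping theorem gives the supremum statement; this replaces your hands-on characteristic-function/Ottaviani computation with a one-line citation. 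Your argument is more self-contained, while the paper's is shorter.
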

\begin{proof}
We choose the maximal $b_n$ such that~\eqref{eq:assumption} is satisfied for $\xi'^n_t=b_n\xi^n_t$:
\[b_n=\sup\{b\in(0,1]:\p(\sup_{t\leq 1}|b\xi^n_t|>1)\leq 1/2\}.\]
Since $b_n$ is upper bounded by construction, we still have $\xi'^n_{T_n}\cip 0$. Now the previous lemma implies that $b_n\xi^n_1\cip 0$,
and then according to the standard theory~\cite[Thm.\ 15.17]{kallenberg} we also have convergence on the process level. By the continuous mapping theorem we find
\[b_n\sup_{t\leq 1}|\xi^n_t|\cip 0,\]
whereas by maximality of $b_n$ it must be that $\p(\sup_{t\leq 1}|2b_n\xi^n_t|>1)> 1/2$ for any~$b_n<1$.
Hence $b_n=1$ for all large $n$ and the proof is complete.
\end{proof}

\begin{proof}[Proof of Proposition~\ref{prop:conv}]
Take any sequence $0\leq h_n\to 0$ and note that $h_n\xi^n_{T_n}\cip 0$. By Lemma~\ref{lem:levy_cip0} we also have $h_n\xi^n_1\cip 0$.
According to~\cite[Lem.\ 4.9]{kallenberg} the sequence $\xi^n_1$ is tight. It is standard~\cite[Prop.\ 5.21]{kallenberg} that every subsequence has a weakly convergent further subsequence $\xi^{n_k}_1$.
Then it must be~\cite[Thm.\ 5.12]{kallenberg} that the limit is $Z'_1$ for some L\'evy process~$Z'$,
and $\xi^{n_k}\convd Z'$, see~\cite[Thm.\ 15.17]{kallenberg}.
But $Z'$ is necessarily continuous at time~$1$ a.s., and thus $\xi^{n_k}_{T_{n_k}}\convd Z'(1)$ showing that $Z'_1$ and $Z$ have the same distribution.
Thus $\xi^{n_k}_1\convd Z$ and the proof is now complete.
\end{proof}

\section{Application to self-similar L\'evy processes conditioned to stay positive}\label{sec:application}
\subsection{Definition and properties}
Let $X^0$ be a non-monotone $1/\a$-self-similar L\'evy process. 
In particular, $X^0$ is either (I) a drift-less Brownian motion ($\a=2$) or (II) a strictly $\a$-stable L\'evy process with $\a\in(0,2)$. 
Without real loss of generality we may fix the scale, and so in case (I) we assume that $X_0$ is a standard Brownian motion.
We also define the negativity parameter
\[\rho=\p(X^0_1<0)\in(0,1),\]
which additionally must satisfy $\a-1\leq \a\rho\leq 1$ and, in particular, $\rho=1/2$ in case~(I).

Let $X$ be the process $X^0$ conditioned to stay positive when started from $x>0$. Formally it is defined via Doob's $h$-transform~\cite{lampertirepresentation}:
\begin{equation}\label{eq:htransform}
  	\pp_x^\uparrow(A):=h^{-1}(x)\ee[h(x+X^0_t)\ind{A}\ind{x+\underline X^0_t>0}],\qquad t\geq0,A\in\mathcal F_t,
\end{equation}
where $h(x)=x^{\a\rho}$, see also~\cite{bertoin_splitting} for the case when $X^0$ is a general L\'evy process.
We write $(X,\p)$ for the pair $(x+X^0,\p^\uparrow_x)$ and specify $x>0$ separately when needed. 
Let us also mention that the new law in~\eqref{eq:htransform} coincides with the limit of $\p(A|x+\underline X^0_s>0)$ as $s\to\infty$, explaining the name 'conditioned to stay positive'. 
Importantly, $X$ is a strictly positive pssMp with Hurst parameter~$1/\a$.
Such processes naturally arise in limit theory concerned with extremes, first passage times and Skorokhod reflection, see~\cite{ivanovs2018,iva_podolskij} and references therein.
Importantly, in case (I) the process $X$ is Bessel-3.

As before, let $\xi$ be the L\'evy process in the Lamperti representation~\eqref{eq:lamperti} of~$X$.
In case (I) the process $\xi$ is a Brownian motion with unit variance and drift $1/2$,~see~\cite{cpy}.
In case (II) the L\'evy triplet of $\xi$  has been identified in~\cite{lampertirepresentation}, excluding the non-symmetric Cauchy case.
It is worth mentioning that $\xi$ has no Brownian component, and its L\'evy density behaves as the L\'evy density of the original stable process $X^0$ both at~$0+$ and at~$0-$.
Furthermore, $\xi$ is a pure jump process when $\a\in(0,1)$.
Finally, we note that~\cite[Eq.\ (17)]{lampertirepresentation} has a typo: the second term should come with a minus sign, which only affects the drift parameter.

Importantly, Theorem~\ref{thm:ppsMpzooming} allows to identify $\widehat\xi$ and to verify assumption~\eqref{eq:zoominginlimit} without the knowledge of~$\xi$.
It turns out that $\widehat\xi$ has the law of the original process $X^0$ and, in particular, $\beta=\a$.
\begin{proposition}\label{prop:conditioned}
Let $X$ be $X^0$ conditioned to stay positive.
Then the assumption~\eqref{eq:zoominginlimit} is satisfied with
\[a_n=n^{1/\a},\qquad\widehat\xi\stackrel{d}{=}X^0.\]
\end{proposition}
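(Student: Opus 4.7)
My plan hinges on the reduction afforded by Theorem~\ref{thm:ppsMpzooming}: its second statement, together with the remark immediately after it, tells us that any non-zero weak limit of $a_n(X_{1/n}-1)$ for $x=1$ must equal $\widehat\xi_1$ in law. Thus it suffices to prove the scalar convergence
\[n^{1/\alpha}(X_{1/n} - 1) \convd X^0_1 \qquad\text{under } \p^\uparrow_1.\]
This both certifies~\eqref{eq:zoominginlimit} with $a_n = n^{1/\alpha}$ and identifies $\widehat\xi_1 \overset{d}{=} X^0_1$; since $a_n$ is regularly varying with index $1/\alpha$ we must have $\beta = \alpha$, and two Lévy processes with the same one-dimensional marginal at time $1$ coincide in law, giving $\widehat\xi \overset{d}{=} X^0$.

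For the scalar limit I would fix a bounded continuous $f$ and apply the $h$-transform identity~\eqref{eq:htransform} with $h(y) = y^{\alpha\rho}$ (so that $h(1) = 1$), converting the $\p^\uparrow_1$-expectation into one under the original law of $X^0$. Joint $1/\alpha$-self-similarity of $(X^0, \underline X^0)$ then lets me pull the scaling inside:
\[\e^\uparrow_1[f(n^{1/\alpha}(X_{1/n}-1))] = \e\bigl[D_n\, f(X^0_1)\bigr], \qquad D_n := (1 + n^{-1/\alpha} X^0_1)^{\alpha\rho} \ind{1 + n^{-1/\alpha} \underline X^0_1 > 0}.\]
Since $\underline X^0_1 > -\infty$ a.s., $D_n \to 1$ almost surely as $n\to\infty$, and because $\p^\uparrow_1$ is a probability measure we have the mass conservation $\e[D_n] = 1$ for every $n$.

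The main delicate point is the passage to the limit inside $\e[D_n f(X^0_1)]$: in the stable case $X^0_1$ has heavy tails and $(1 + n^{-1/\alpha} X^0_1)^{\alpha\rho}$ is not dominated by any integrable random variable, so naive dominated convergence fails. This is, however, precisely the classical setting of Scheffé's lemma: a.s.\ convergence of non-negative integrands together with preserved total mass upgrades automatically to $L^1$ convergence. The bound $|\e[D_n f(X^0_1)] - \e[f(X^0_1)]| \leq \|f\|_\infty\, \e|D_n - 1| \to 0$ then closes the argument.
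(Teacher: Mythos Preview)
Your proof is correct and follows the same route as the paper: reduction via Theorem~\ref{thm:ppsMpzooming} to the scalar limit $n^{1/\alpha}(X_{1/n}-1)\convd X^0_1$ under $\p^\uparrow_1$, then the $h$-transform identity~\eqref{eq:htransform} combined with self-similarity of $(X^0,\underline X^0)$. The only minor difference is in the final passage to the limit: the paper works with the distribution function, so the indicator $\ind{X_1^0\leq z}$ caps $X_1^0$ from above and makes the integrand uniformly bounded (dominated convergence is then immediate), whereas your Scheff\'e argument via the mass conservation $\e[D_n]=1$ handles arbitrary bounded continuous test functions and neatly sidesteps the heavy-tail issue you correctly flagged.
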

\begin{proof}
According to Theorem~\ref{thm:ppsMpzooming} we only need to verify that \[n^{1/\a}(X_{1/n}-1)\convd X^0_1\] for $x=1$ as $\R\ni n\to\infty$.
Using \eqref{eq:htransform} and self-similarity of $X^0$ one easily verifies that
\begin{equation*}
	\Prob{n^{1/\a}(X_{1/n}-1)\leq z}=\Mean{(n^{-1/\a}X_1^0+1)^{\a\rho}\ind{X_1^0\leq z}\ind{n^{-1/\a}\underline X_1^0>-1}},
\end{equation*}
for any $z\in\R$. But the right-hand side converges to $\Prob{X_1^0\leq z}$ since $X_1^0$ has no atoms, and we are done.
\end{proof}

Alternatively, one may prove Proposition~\ref{prop:conditioned} using the knowledge of the L\'evy triplet of $\xi$ by checking the conditions of~\cite[Thm.\ 2]{ivanovs2018}.
The latter approach requires verification that the drift parameter of $\xi$ is zero in case $\a<1$, and this is how we found a typo in~\cite{lampertirepresentation}.
Furthermore, calculations are somewhat tedious in the symmetric Cauchy case, whereas the triplet of $\xi$ in the non-symmetric case is not yet available.

In various applications the law of interest corresponds to the weak limit of $\p_x^\uparrow$ as $x\downarrow 0$, which corresponds to the conditioned process started at~0.
This can be approximated by taking small $x>0$, which then results in large $r=x^{-\a}t$. This does not seem to be a problem due to the structure of $L(r)$ and the fact that $\xi_t\to\infty$ as $t\to\infty$.

\subsection{Simulations}

Here we present a small simulation study in order to illustrate our results. 
For simplicity, we take a standard Brownian motion conditioned to stay positive (Bessel-3 process) as the pssMp $X$ of interest.
Let us stress that simple and exact simulation methods exist for Bessel-3 process, and our only purpose is to illustrate the results of~\S\ref{sec:mainresults}.
In this case $\alpha=2,a_n=\sqrt n$, $\xi$ is a Brownian motion with unit variance and drift $1/2$, whereas $\widehat{\xi}$ is a standard Brownian motion, see Proposition~\ref{prop:conditioned}. 
We also note that assumption~\eqref{eq:density} is satisfied since the density $g_t(x,y)$ in Lemma~\ref{lem:density} is indeed jointly continuous in $t,x,y>0$, see~\cite[1.8.8, p.\ 613]{borodin}.

We start $X$ at $x=1$ and simulate at time $t=1$. Hence $X_1=\exp(\xi_{\tau(1)})$. We use two rather coarse discretization grids corresponding to $n=10$ and $n=100$.
The true quantities are computed using $N=10^6$, so that $\xi^{(N)}$ and $X^{(N)}$ are used in place of $\xi$ and $X$, respectively.
The process $\Delta$ in~\eqref{eq:Delta} is approximated by taking $\xi^{(N)}$ in the Brownian integral and removing the sum over jump times, which must be 0 in the case of continuous~$\xi$.
Finally, $\tau(1)$ is replaced by $\tau_N(1)$.
Importantly, the increments of $\xi^{(N)}$ are assembled into the increments of $\xi^{(n)}$, so that the two processes correspond to the same sample path.
These sample paths are then reused in construction of the limit variables.

In Figure \ref{fig:sim_results_time} below we compare the distributions of $n(\tau(1)-\tau_n(1))$ and the limit $L(1)$, see Proposition~\ref{prop:tau}. All histograms are based on simulation of 10,000 independent copies of the relevant random variable. 
In red we have $n(\tau(1)-\tau_n(1))$ and in blue we have $L(1)$. Since some values are quite large the histogram has been trimmed to contain at least $98\%$ of the realizations. More precisely the lower limit is the minimum of the $1\%$-quantiles for $n(\tau(1)-\tau_n(1))$ and $L(1)$, and the upper limit is the maximum of the $99\%$-quantiles.
We discuss these large values in detail later.
Let us remark that already at $n=10$ we see very similar histograms and at $n=100$ the fit is even better.
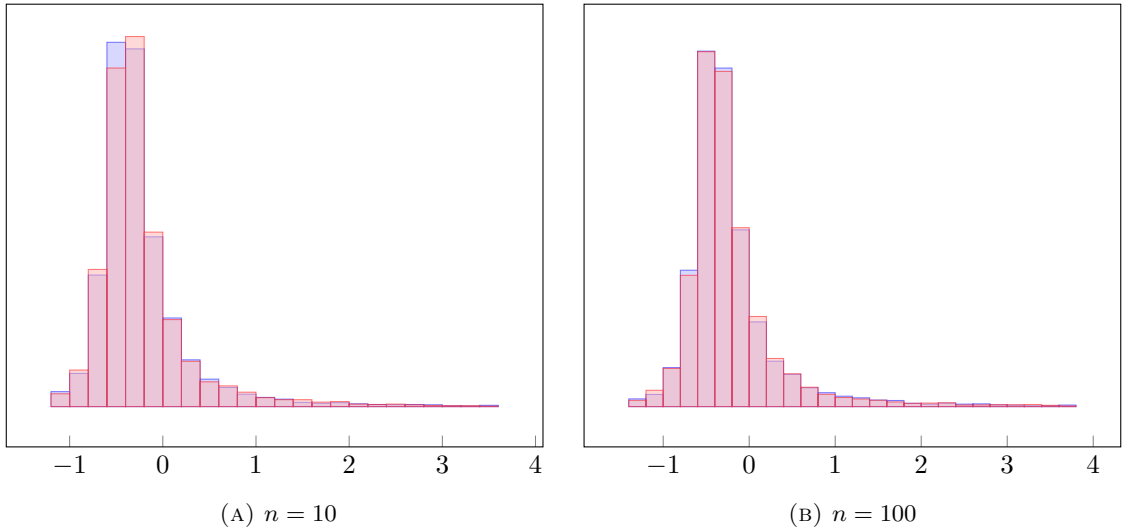
\begin{figure}[h]
	\centering
	\begin{subfigure}{.48\textwidth}
		\centering
		\begin{tikzpicture}
			\begin{axis}[area style, tick pos=left, ymajorticks=false, width=1.2\linewidth,ymin=-0.15,ymax=1.5]
				\addplot+[ybar interval, opacity=0.5] table[x="V1",y="V2", col sep=comma] {histogram_L.csv};
				\addplot+[ybar interval, opacity=0.5] table[x="V1",y="V2", col sep=comma] {histogram_prelimit_tau.csv};
			\end{axis}
		\end{tikzpicture}
		\caption{$n=10$}
	\end{subfigure}\quad%
	\begin{subfigure}{.48\textwidth}
		\centering
		\begin{tikzpicture}
			\begin{axis}[area style, tick pos=left, ymajorticks=false, width=1.2\linewidth,ymin=-0.15,ymax=1.5]
				\addplot+[ybar interval, opacity=0.5] table[x="V1",y="V2", col sep=comma] {histogram_L2.csv};
				\addplot+[ybar interval, opacity=0.5] table[x="V1",y="V2", col sep=comma] {histogram_prelimit_tau2.csv};
			\end{axis}
		\end{tikzpicture}
		\caption{$n=100$}
	\end{subfigure}
	\caption{Histograms for $n(\tau(1)-\tau_n(1))$ in red and $L(1)$ in blue trimmed to contain at least $98\%$ of the realizations.}
	\label{fig:sim_results_time}
\end{figure}

In Figure \ref{fig:sim_results_space} we depict the discretization errors for the pssMp itself. That is, we compare the distributions of $\sqrt{n}(X_1-X^{(n)}_1)/X_1$ in red and the limit $\widehat\xi_{L(1)+U}$ in blue, see Corollary~\ref{cor:mainFDD}. The fit is worse than in Figure~\ref{fig:sim_results_time}, which is to be expected since now we combine the error in time and the zooming-in approximation. Again we have trimmed the histograms to contain at least $98\%$ of the realizations.
\begin{figure}[h]
	\centering
	\begin{subfigure}{.48\textwidth}
		\centering
		\begin{tikzpicture}
			\begin{axis}[area style, tick pos=left, ymajorticks=false, width=1.2\linewidth,ymin=-0.1,ymax=1]
				\addplot+[ybar interval, opacity=0.5] table[x="V1",y="V2", col sep=comma] {histogram_xihat.csv};
				\addplot+[ybar interval, opacity=0.5] table[x="V1",y="V2", col sep=comma] {histogram_prelimit_X.csv};
			\end{axis}
		\end{tikzpicture}
		\caption{$n=10$}
	\end{subfigure}\quad%
	\begin{subfigure}{.48\textwidth}
		\centering
		\begin{tikzpicture}
			\begin{axis}[area style, tick pos=left, ymajorticks=false, width=1.2\linewidth,ymin=-0.1,ymax=1]
				\addplot+[ybar interval, opacity=0.5] table[x="V1",y="V2", col sep=comma] {histogram_xihat2.csv};
				\addplot+[ybar interval, opacity=0.5] table[x="V1",y="V2", col sep=comma] {histogram_prelimit_X2.csv};
			\end{axis}
		\end{tikzpicture}
		\caption{$n=100$}
	\end{subfigure}
	\caption{Histograms for $\sqrt{n}(X_1-X^{(n)}_1)/X_1$ in red and $\widehat\xi_{L(1)+U}$ in blue trimmed to contain at least $98\%$ of the realizations.}
	\label{fig:sim_results_space}
\end{figure}
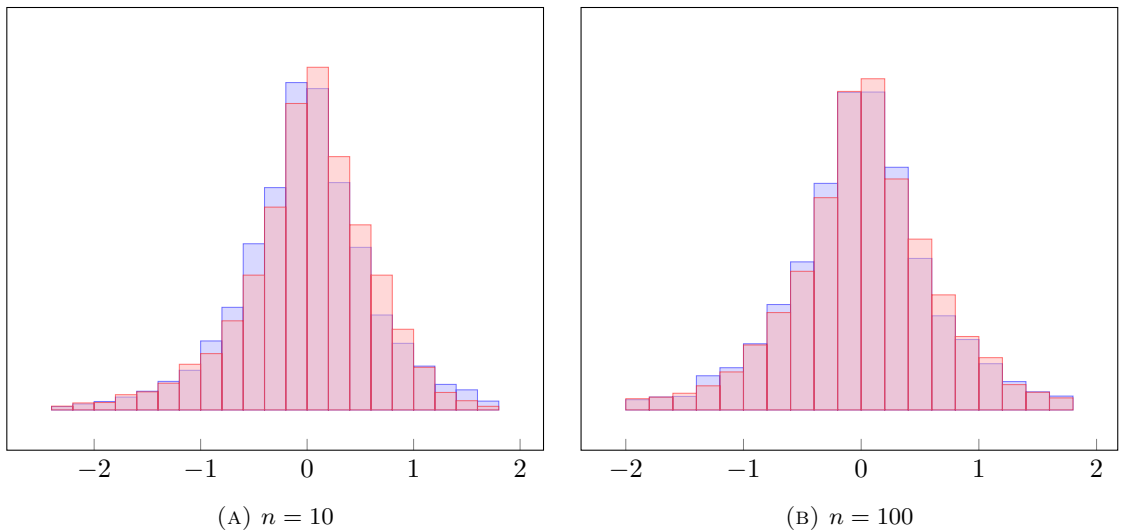

In order to understand the extreme values of $L(1)$ and $n(\tau(1)-\tau_n(1))$ we depict a sample path in Figure \ref{subfig:extreme_expxi} which results in large values of both variables. In this case $n=10$. Notice that $I^{(n)}$ hits $1$ right before $\exp(\a\xi_t)$ vanishes ($-\xi_t$ becomes large), whereas $I$ hits $1$ upon much later.
This illustrates how $n(\tau(1)-\tau_n(1))$ can become very large. Furthermore, $\exp(\a\xi_{\tau(1)})=X^\a_1$ is close to zero and so $L(1)=-\Delta_{\tau(1)}X^{-\a}_1$ can be large as well.
It seems that heaviness of the tails of $L(1)$ is determined by $X^{-\a}_1$; for the Bessel-3 process this quantity has a power tail with exponent $-3/2$, see~\cite[1.0.6, p.\ 373]{borodin}.
\begin{figure}[H]
	\centering
	\begin{subfigure}[t]{.48\textwidth}
		\vskip 0pt
		\centering
		\begin{tikzpicture}
			\begin{axis}[tick pos=left, width=1.15\linewidth, ymin=-0.3,ymax=3,ytick={0,1,2},xmin=-2,xmax=20,xtick={0,5,10,15}]
				\addplot+ [mark=none, color=blue] table[x="V1",y="V2", col sep=comma] {extreme_exponentiallevy.csv};
				\addplot+ [jump mark left,mark=none, color=red] table[x="V1",y="V2", col sep=comma] {extreme_discretizedexponentiallevy.csv};
			\end{axis}
		\end{tikzpicture}
		\captionsetup{width=.8\linewidth}
		\caption{The processes $\exp(\a\xi_t)$ and $\exp(\a\xi^\n_t)$ in respectively blue and red with $n=10$.}
		\label{subfig:extreme_expxi}
	\end{subfigure}\quad%
	\begin{subfigure}[t]{.48\textwidth}
		\vskip 0pt
		\centering
		\begin{tikzpicture}
			\begin{axis}[tick pos=left, width=1.15\linewidth,ymin=-0.13,ymax=1.3,ytick={0,1},xmin=-2,xmax=20,xtick={0,5,10,15}]
				\addplot+ [mark=none, color=blue, solid] table[x="V1",y="V2", col sep=comma] {extreme_integral.csv};
				\addplot+ [mark=none, color=red, solid] table[x="V1",y="V2", col sep=comma] {extreme_discretizedintegral.csv};
				\draw[dashed,>=stealth,color=blue](axis cs:18.0963,-0.13)--(axis cs:18.0963,1);
        		\draw[dashed,>=stealth,color=red](axis cs:1.23677,-0.13)--(axis cs:1.23677,1);
        		\draw[dashed,>=stealth,color=black](axis cs:-2,1)--(axis cs:20,1);
			\end{axis}
		\end{tikzpicture}
		\captionsetup{width=.8\linewidth}
		\caption{Integrals of the processes in Figure \ref{subfig:extreme_expxi}. The colored dashed lines mark $\tau(1)$ and $\tau_n(1)$.}
	\end{subfigure}
	\caption{A sample path and corresponding integrals producing extreme values of $L(1)$ and $n(\tau(1)-\tau_n(1))$.}
	\label{fig:extreme_prelimit_tau}
\end{figure}
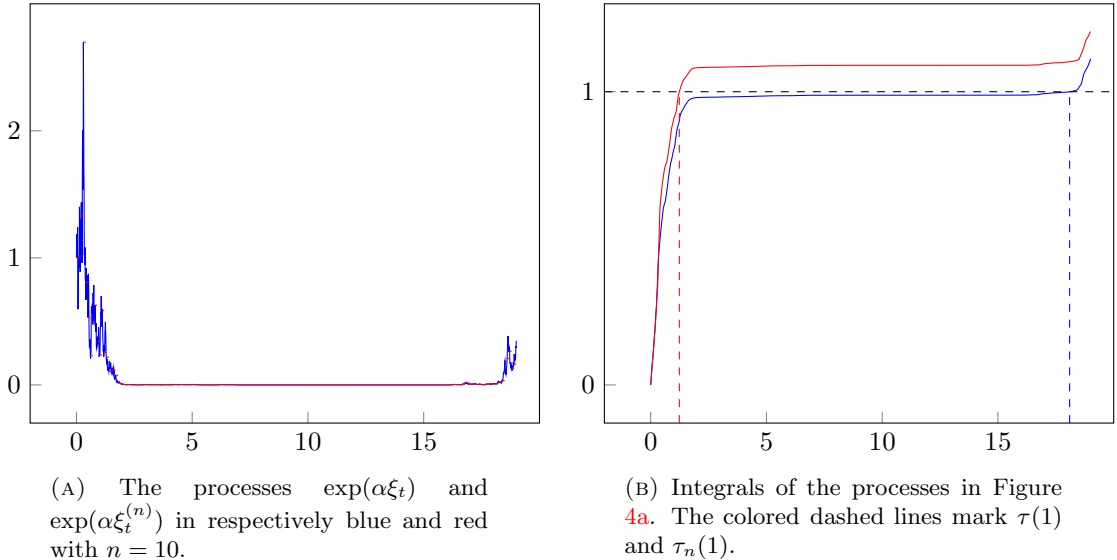

In conclusion, discretization provides the standard rate of convergence $n^{-1/\a}$, but the limit variables normally exhibit heavy-tails.

\section{Extensions and comments}\label{sec:comments}
\subsection{Trapezoidal approximation}\label{sec:trapezoid}
An interesting modification of our approximation scheme is obtained by considering the trapezoidal rule (instead of the left Riemann sum) in computation of the integral~$I_t$, so that the points $(i/n,\exp(\a\xi_{i/n}))$ are connected by straight lines. 
Importantly, all the results and proofs of this paper continue to hold true given that Theorem~\ref{thm:jacod} and the definition of $\Delta$ in~\eqref{eq:Delta} are adjusted accordingly, which we now discuss.

Observe that the trapezoidal approximation $\widetilde I^\n$ satisfies
\[\widetilde I^\n_{i/n}=I^\n_{i/n}+(f(\xi_{i/n})-f(0))/(2n)\]
and hence the form of the new limiting process is intuitively clear:
\[\widetilde\Delta_t=\frac{\sigma^2}{\sqrt{12}}\int_0^t f'(\xi_s)\dd W'_s+\sum_{m:T_m\leq t}(f(\xi_{T_m})-f(\xi_{T_m-}))(\kappa_m-\frac{1}{2}),\]
that is, the bias $(f(\xi_t)-f(0))/2$ is removed from~\eqref{eq:Delta}. 
\begin{theorem} The trapezoidal approximation $\tilde I^\n$ satisfies
\[n\left(I_{[tn]/n}-\widetilde I^{(n)}_{[tn]/n}\right)_{t\geq 0}\stably(\widetilde\Delta_t)_{t\geq 0}.\]
\end{theorem}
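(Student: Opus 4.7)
The plan is to deduce this theorem from Theorem~\ref{thm:jacod} by absorbing the boundary bias into the limit process. First I would record the elementary telescoping identity
\[
\widetilde I^{(n)}_{i/n}-I^{(n)}_{i/n}=\sum_{k=1}^{i}\frac{1}{2n}\bigl(f(\xi_{k/n})-f(\xi_{(k-1)/n})\bigr)=\frac{1}{2n}\bigl(f(\xi_{i/n})-f(0)\bigr),
\]
which evaluated at $i=[tn]$ gives the key decomposition
\[
n\bigl(I_{[tn]/n}-\widetilde I^{(n)}_{[tn]/n}\bigr)=\Delta^{(n)}_t-\tfrac{1}{2}\bigl(f(\xi_{[tn]/n})-f(0)\bigr).
\]

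Next I would analyse the two summands on the right separately. By Theorem~\ref{thm:jacod}, $\Delta^{(n)}\stably\Delta$ in the Skorokhod $J_1$ topology. For the bias term, since $\xi$ is càdlàg, the piecewise constant path $t\mapsto\xi_{[tn]/n}$ converges a.s.\ to $\xi$ in $J_1$ on any compact interval, and continuity of $f(x)=\exp(\alpha x)$ then gives $f(\xi^{(n)})\to f(\xi)$ a.s.\ in $J_1$. In particular $\tfrac{1}{2}(f(\xi_{[tn]/n})-f(0))$ converges a.s.\ in $J_1$ to the $\mathcal F$-measurable process $\tfrac{1}{2}(f(\xi_t)-f(0))$.

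To combine the two convergences into convergence of the difference I would appeal to joint stable convergence: since the limit of the second summand is $\mathcal F$-measurable and the convergence is a.s., the definition \eqref{eq:stable_def} of stable convergence immediately gives the joint stable convergence
\[
\Bigl(\Delta^{(n)},\tfrac{1}{2}(f(\xi^{(n)})-f(0))\Bigr)\stably\Bigl(\Delta,\tfrac{1}{2}(f(\xi)-f(0))\Bigr)
\]
in the $\mathbb R^2$-valued $J_1$ topology; see the characterisation \eqref{eq:stable_simple}. The continuous mapping theorem, applied to the coordinate subtraction map, then yields
\[
n\bigl(I_{[\cdot n]/n}-\widetilde I^{(n)}_{[\cdot n]/n}\bigr)\stably\Delta-\tfrac{1}{2}(f(\xi)-f(0))=\widetilde\Delta,
\]
which matches the definition of $\widetilde\Delta_t$ given in the statement.

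The main subtlety is the passage from marginal $J_1$ convergence to $J_1$ convergence in $\mathbb R^2$, which is required to make the subtraction a continuous operation. This works here because the jump times of the two components coincide both in the pre-limit (every jump of either component sits at some $k/n$, and in particular the macroscopic jumps of $\Delta^{(n)}$ occur precisely where $f(\xi^{(n)})$ jumps) and in the limit (both $\Delta$ and $f(\xi)$ jump exactly at the jump times $T_m$ of $\xi$). Hence a common time change realises the $J_1$-distance simultaneously for both coordinates, so joint $\mathbb R^2$-valued $J_1$ convergence is inherited from the coordinate convergences, and the continuous mapping argument closes the proof.
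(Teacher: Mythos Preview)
Your decomposition and the first two steps are correct; the difficulty is exactly where you place it, in upgrading marginal $J_1$ convergence to joint $D(\mathbb{R}^2,J_1)$ convergence so that subtraction becomes a continuous map. This upgrade, however, is not justified by what you have written.

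Stable convergence of $\Delta^{(n)}$ together with a.s.\ convergence of the $\mathcal F$-measurable bias only gives joint convergence in the \emph{product} space $D(\mathbb{R})\times D(\mathbb{R})$, and subtraction is discontinuous on that space precisely at pairs whose components share a jump time --- which is the case here, since both $\Delta$ and $f(\xi)$ jump at every~$T_m$. Your claim that ``a common time change realises the $J_1$-distance simultaneously for both coordinates'' is a pathwise assertion about a convergence that is only in law; even after a Skorokhod embedding it is not automatic that the time changes witnessing the two marginal $J_1$-convergences can be taken equal. The observation that both pre-limit processes are piecewise constant on the grid $\{k/n\}$ with their macroscopic jumps at the same point $\lceil T_m n\rceil/n$ is correct and morally explains why the result holds, but turning it into a proof of $D(\mathbb{R}^2,J_1)$-convergence requires genuine work at the level of the proof of Theorem~\ref{thm:jacod}, not a one-line appeal.

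The paper makes exactly this point in the paragraph following the theorem: it states that the result cannot be retrieved directly from Theorem~\ref{thm:jacod} via the trapezoidal identity, because the continuous mapping theorem fails for the sum of two processes with common jumps. Rather than subtracting two $J_1$-limits, the paper reopens the proof in \cite[Ch.~6]{jacodbook} and carries the extra boundary term through the argument, so that $\widetilde\Delta$ emerges directly. To close your route you would have to establish $(\Delta^{(n)},f(\xi^{(n)}))\stably(\Delta,f(\xi))$ in $D(\mathbb{R}^2,J_1)$, which is not a consequence of Theorem~\ref{thm:jacod} as stated and amounts to essentially the same effort.
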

\begin{proof}
The proof requires only some simple adaptations of the proof in~\cite[Ch.\ 6]{jacodbook}. Firstly, we note that the reduction of the problem in \S6.2.2 is still true, because of the u.c.p. convergence of processes in (6.2.13). Secondly, (6.3.6) now contains our new term, which is rewritten using Itô's formula, and the limiting expression in (6.3.7) is modified accordingly. The expressions, in fact, become even shorter, and the rest of the proof applies.
\end{proof}

It must be noted that this result can not be directly retrieved from~\cite[Thm.\ 6.1.2]{jacodbook} and the basic relation between left Riemann sum and trapezoidal rule.
The problem is that the continuous mapping theorem does not apply for the sum of the two processes of interest since both components may jump at the same time. This issue does not arise in the setting of continuous Itô semimartingales considered in~\cite{altmeyer1}.

\subsection{Absolute continuity of the inverse}\label{sec:density}
Here we establish a sufficient condition for~\eqref{eq:density} in terms of the integral $I_t$ and the end-value $\exp(\a\xi_t)$. 
We assume that the pair
\[(I_t,Y_t)=\big(\int_0^t \exp(\a \xi_s)\dd s,\exp(\a \xi_t)\big)\] 
has a density $g_t(x,y)$ for all $t>0$.
Recall that $\tau(r)$ is defined by the relation $I_{\tau(r)}=r$, and that we need simple conditions implying that the pair $(\tau(r),Y_{\tau(r)})$ has a density for all~$r>0$. 

\begin{lemma}\label{lem:density}
	Assume that $g_t(x,y)$ is jointly continuous in $x,y,t>0$. Then for any $r>0$
	\[\p(\tau(r)\in\dd t,Y_{\tau(r)}\in\dd y)=yg_t(r,y)\dd t\dd y,\qquad t,y>0.\]
\end{lemma}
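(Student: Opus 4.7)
The plan is to test the claimed joint density against a bounded nonnegative continuous function $\phi$ with compact support in $(0,\infty)^2$. The key observation is that $I$ is a.s.\ $C^1$ and strictly increasing with derivative $Y_s=\exp(\alpha\xi_s)>0$, so the map $u\mapsto \tau(u)$ is the genuine (a.s.) inverse and satisfies $\tau'(u)=1/Y_{\tau(u)}$. This gives the pathwise change of variables
\[
\int_{r-\epsilon}^{r+\epsilon}\phi(\tau(u),Y_{\tau(u)})\,\dd u=\int_{\tau(r-\epsilon)}^{\tau(r+\epsilon)}\phi(s,Y_s)\,Y_s\,\dd s,\qquad \epsilon\in(0,r),
\]
for each realization.

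Next I would divide both sides by $2\epsilon$ and take expectations. On the left, continuity of $\tau$ and of $s\mapsto Y_s$ at $\tau(r)$ (combined with Lebesgue differentiation) yields the a.s.\ limit $\phi(\tau(r),Y_{\tau(r)})$; boundedness of $\phi$ and bounded convergence give
\[
\frac{1}{2\epsilon}\,\e\!\int_{r-\epsilon}^{r+\epsilon}\phi(\tau(u),Y_{\tau(u)})\,\dd u\;\longrightarrow\;\e\!\left[\phi(\tau(r),Y_{\tau(r)})\right].
\]
On the right, rewrite the time interval as an event on $I_s$, namely $\{\tau(r-\epsilon)<s<\tau(r+\epsilon)\}=\{r-\epsilon<I_s<r+\epsilon\}$, apply Tonelli, and express the expectation via the joint density $g_s$ of $(I_s,Y_s)$:
\[
\e\!\int_{\tau(r-\epsilon)}^{\tau(r+\epsilon)}\phi(s,Y_s)Y_s\,\dd s=\int_0^\infty\!\!\int_{r-\epsilon}^{r+\epsilon}\!\!\int_0^\infty \phi(s,y)\,y\,g_s(x,y)\,\dd y\,\dd x\,\dd s.
\]

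Finally, divide by $2\epsilon$ and pass to the limit $\epsilon\downarrow 0$. Joint continuity of $g_s(x,y)$ in $(s,x,y)$ together with the compact support of $\phi$ confines the relevant $(s,y)$ to a compact subset of $(0,\infty)^2$, on a neighborhood of which $g_s$ is uniformly bounded; dominated convergence therefore gives
\[
\frac{1}{2\epsilon}\int_{r-\epsilon}^{r+\epsilon}\!\int_0^\infty\!\!\int_0^\infty \phi(s,y)\,y\,g_s(x,y)\,\dd y\,\dd s\,\dd x\;\longrightarrow\;\int_0^\infty\!\!\int_0^\infty \phi(s,y)\,y\,g_s(r,y)\,\dd y\,\dd s.
\]
Equating the two limits for arbitrary such $\phi$ identifies the joint law of $(\tau(r),Y_{\tau(r)})$ as $yg_t(r,y)\dd t\dd y$, as claimed. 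The main obstacle is the last step: one must be sure that joint continuity of $g$ (rather than mere existence) is what makes the slicing at $x=r$ legitimate and the dominated convergence argument valid; without continuity one would only recover an identity holding for a.e.\ $r$.
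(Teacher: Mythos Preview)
Your argument is correct and genuinely different from the paper's. The paper fixes $r$ and perturbs the \emph{time} variable: it shows that $F(t)=\p(\tau(r)\le t,\,Y_{\tau(r)}\in[a,b])$ has right derivative $\int_a^b y\,g_t(r,y)\,\dd y$ by writing $F(t+h)-F(t)=\p(I_t<r\le I_{t+h},\,Y_{\tau(r)}\in[a,b])$, invoking the Markov decomposition $I_{t+h}=I_t+Y_t I'_h$, and then spending most of the effort on controlling the discrepancy between $Y_{\tau(r)}$ and $Y_t$ via uniform bounds on $\exp(\alpha\sup_{u\le h}\xi'_u)$ (using a supremum inequality for L\'evy processes). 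You instead perturb the \emph{level} variable and exploit the exact identity $\{\tau(r-\epsilon)<s<\tau(r+\epsilon)\}=\{r-\epsilon<I_s<r+\epsilon\}$, which lets you integrate directly against $g_s$ without ever splitting the process or bounding its running supremum.

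What your route buys is economy: no Markov property, no supremum estimates, and the joint continuity of $g$ enters exactly once (to take the $x$-average at $r$). It also makes transparent why continuity of $g$ is the right hypothesis, as you note in your last sentence. The only L\'evy-specific ingredient you still use is that $\xi$ is a.s.\ continuous at $\tau(r)$, needed for the left-hand limit; this is the quasi left-continuity fact the paper records earlier. One small cosmetic point: the convergence on the left is simply continuity of the integrand at $u=r$ (the phrase ``Lebesgue differentiation'' suggests an a.e.\ statement, whereas you want it at the specific $r$).
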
	
\begin{proof}
	For fixed $r>0$ and $0<a<b<\infty$ consider 
	\[F(t)=\p(\tau(r)\leq t,Y_{\tau(r)}\in [a,b]),\qquad t\geq 0.\]
	We note that it is sufficient to show that $F(t)$ is a (left-) continuous function with the right-derivative 
	\begin{equation}\label{eq:partial}\partial_+F(t)=\int_a^b yg_t(r,y)\dd y=:f(t)\end{equation}
	for all $t>0$. This is so, because $f$ is continuous and so with $G(t)=\int_0^tf(u)\dd u$ we have
	$\partial_+(F(t)-G(t))=0$ for all $t>0$, implying that $F(t)$ coincides with $G(t)$ on $t>0$ up to a constant.
	By taking $t\downarrow 0$ we see that this constant is~0 and hence
	\[F(t)=\int_0^t\int_a^b yg_u(r,y)\dd y\dd u\]
	establishing the claim.
	
	For $h>0$ we note the identity
	\begin{equation}\label{eq:Fdif}F(t+h)-F(t)=\p(I_t<r\leq I_{t+h},Y_{\tau(r)}\in [a,b]).\end{equation}
	Moreover, $I_{t+h}=I_t+Y_tI_h'$ with $I_h'$ corresponding to $\xi_u'=\xi_{t+u}-\xi_t$, and so the latter is independent of $\mathcal F_t$. 
	Next, we note for any $z>0$ that 
	\begin{align}\label{eq:lim_dens}\frac{1}{h}\p(I_t<r\leq I_t+Y_tzh,Y_{t}\in [a,b])=\frac{1}{h}\int_a^b\int_{r-yzh}^r g_t(x,y)\dd x \dd y
	\to z\int_a^byg_t(r,y)\dd y.\end{align}
	This follows from the mean value theorem and the fact that $g_t(x,y)$ is bounded for $x$ of interest.

	Define  $\Delta'_h=\exp(\a\sup_{u\leq h} \xi'_u)$ and note that $I'_h\leq h\Delta'_h$.
	Moreover, for any $\epsilon>0$ we may choose $c>1$ large enough so that 
	$\p(\Delta'_h>c)<\epsilon h$ for $h$ small enough. This can be seen from the inequality~\cite[Lem.\ 2, p.\ 420]{gikhman_skorokhod}
	\[\p(\sup_{u\leq h} |\xi'_u|>\log c/\a)\leq (1+\oh(1))\p(|\xi'_h|\geq \log c/(2\a)),\]
	and the standard bounds on the right-hand side, see the argument in~\cite[Lem.\ 30.3]{sato}. 
	Thus in the following we may always assume that $\Delta'_h\leq c$.
	Similarly, we may also assume that $\underline\Delta'_h=\exp(\a\inf_{u\leq h} \xi'_u)\geq \underline c\in(0,1)$.
	
	Now, we readily find that 
	\[\p(I_t<r\leq I_t+Y_tch,Y_t\in[a/c,b/\underline c],\Delta'_h>1+\epsilon)=\oh(h)\]
	and the analogous statement with $\underline\Delta'_h<1-\epsilon$.
	Hence we have the following upper bound on \eqref{eq:Fdif}
	\[\p(I_t<r\leq I_t+Y_tI'_h,I'_h<ch,Y_t\in [a/(1+\epsilon),b/(1-\epsilon)])\]
	up to some negligible terms,
	and a similar lower bound. It is left to condition on $I'_h/h$, to apply the arguments from~\eqref{eq:lim_dens} and to notice that 
	\[\lim_{h\downarrow 0}\e(I_h/h\ind{I_h/h<c})=1,\]
	where the latter is a consequence of the mean value theorem and the dominated convergence theorem.
	Hence \eqref{eq:partial} is now proven.
	Left-continuity of $F(t)$ follows from $\p(I_{t-h}<r\leq I_{t-h}+(b/\underline c)ch)\to 0$.  
\end{proof}

\section*{Acknowledgements}
The authors gratefully acknowledge financial support of Sapere Aude Starting Grant 8049-00021B ``Distributional Robustness in Assessment of Extreme Risk'' from Independent Research Fund Denmark.


\begin{thebibliography}{10}

\bibitem{aldous}
D.~J. Aldous and G.~K. Eagleson.
\newblock On mixing and stability of limit theorems.
\newblock {\em Ann. Probability}, 6(2):325--331, 1978.

\bibitem{altmeyer1}
R.~Altmeyer.
\newblock Central limit theorems for discretized occupation time functionals,
  2019.
\newblock (arXiv:1909.00474 preprint).

\bibitem{asmussen_ivanovs}
S.~Asmussen and J.~Ivanovs.
\newblock Discretization error for a two-sided reflected {L}\'{e}vy process.
\newblock {\em Queueing Syst.}, 89(1-2):199--212, 2018.

\bibitem{bardoux16}
E.~J. Baurdoux, A.~E. Kyprianou, and C.~Ott.
\newblock Optimal prediction for positive self-similar {M}arkov processes.
\newblock {\em Electron. J. Probab.}, 21:Paper No. 48, 24, 2016.

\bibitem{bertoin_splitting}
J.~Bertoin.
\newblock Splitting at the infimum and excursions in half-lines for random
  walks and {L}\'{e}vy processes.
\newblock {\em Stochastic Process. Appl.}, 47(1):17--35, 1993.

\bibitem{bertoinbook}
J.~Bertoin.
\newblock {\em Lévy Processes}.
\newblock Number vb. 121 in Cambridge Tracts in Mathematics. Cambridge
  University Press, 1998.

\bibitem{bisewski_iva}
K.~Bisewski and J.~Ivanovs.
\newblock Zooming-in on a {L}{\'e}vy process: {F}ailure to observe threshold
  exceedance over a dense grid.
\newblock 2020.
\newblock (arxiv:1904.06162 preprint).

\bibitem{borodin}
A.~N. Borodin and P.~Salminen.
\newblock {\em Handbook of {B}rownian motion---facts and formulae}.
\newblock Probability and its Applications. Birkh\"{a}user Verlag, Basel,
  second edition, 2002.

\bibitem{lampertirepresentation}
M.~E. Caballero and L.~Chaumont.
\newblock Conditioned stable {L}évy processes and the {L}amperti
  representation.
\newblock {\em Journal of Applied Probability}, 43(4):967--983, 2006.

\bibitem{cpy}
P.~Carmona, F.~Petit, and M.~Yor.
\newblock Exponential functionals of {L}{\'e}vy processes.
\newblock In {\em L{\'e}vy processes}, pages 41--55. Springer, 2001.

\bibitem{pssMp_fluct}
L.~Chaumont, A.~Kyprianou, J.~C. Pardo, and V.~Rivero.
\newblock Fluctuation theory and exit systems for positive self-similar
  {M}arkov processes.
\newblock {\em Ann. Probab.}, 40(1):245--279, 2012.

\bibitem{engelke_ivanovs}
S.~Engelke and J.~Ivanovs.
\newblock A {L}\'{e}vy-derived process seen from its supremum and max-stable
  processes.
\newblock {\em Electron. J. Probab.}, 21:Paper No. 14, 19, 2016.

\bibitem{gikhman_skorokhod}
I.~I. Gikhman and A.~V. Skorokhod.
\newblock {\em The Theory of Stochastic Processes. {I}}.
\newblock Classics in Mathematics. Springer-Verlag, Berlin, 2004.
\newblock Translated from the Russian by S. Kotz, Reprint of the 1974 edition.

\bibitem{gonzalez2019varepsilon}
J.~Gonz{\'a}lez~C{\'a}zares, A.~Mijatovi{\'c}, and G.~Uribe~Bravo.
\newblock $\varepsilon$-strong simulation of the convex minorants of stable
  processes and meanders.
\newblock 2019.
\newblock (arXiv:1910.13273 preprint).

\bibitem{ivanovs2018}
J.~Ivanovs.
\newblock Zooming in on a {L}évy process at its supremum.
\newblock {\em Ann. Appl. Probab.}, 28(2):912--940, 04 2018.

\bibitem{iva_podolskij}
J.~Ivanovs and M.~Podolskij.
\newblock Optimal estimation of some random quantities of a {L}{\'e}vy process.
\newblock 2020.
\newblock (arxiv:2001.02517 preprint).

\bibitem{jacodpaper}
J.~Jacod, A.~Jakubowski, and J.~Mémin.
\newblock On asymptotic errors in discretization of processes.
\newblock {\em The Annals of Probability}, 31(2):592--608, 2003.

\bibitem{jacodbook}
J.~Jacod and P.~Protter.
\newblock {\em Discretization of Processes}.
\newblock Stochastic Modelling and Applied Probability. Springer Berlin
  Heidelberg, 2011.

\bibitem{kallenberg}
O.~Kallenberg.
\newblock {\em Foundations of Modern Probability}.
\newblock Probability and its Applications (New York). Springer-Verlag, New
  York, second edition, 2002.

\bibitem{K37}
P.~Kosulajeff.
\newblock Sur la r\'epartition de la partie fractionnaire d'une variable.
\newblock {\em Math. Sbornik}, 2(5):1017--1019, 1937.

\bibitem{lamperti72}
J.~Lamperti.
\newblock Semi-stable {M}arkov processes. {I}.
\newblock {\em Z. Wahrscheinlichkeitstheorie und Verw. Gebiete}, 22:205--225,
  1972.

\bibitem{pardo_envelope}
J.~C. Pardo.
\newblock The upper envelope of positive self-similar {M}arkov processes.
\newblock {\em J. Theoret. Probab.}, 22(2):514--542, 2009.

\bibitem{rivero_survey}
J.~C. Pardo and V.~Rivero.
\newblock Self-similar {M}arkov processes.
\newblock {\em Bol. Soc. Mat. Mexicana (3)}, 19(2):201--235, 2013.

\bibitem{PRvS}
J.~C. Pardo, V.~Rivero, and K.~van Schaik.
\newblock On the density of exponential functionals of {L}{\'e}vy processes.
\newblock {\em Bernoulli}, 19(5A):1938--1964, 2013.

\bibitem{podolskij2010}
M.~Podolskij and M.~Vetter.
\newblock Understanding limit theorems for semimartingales: A short survey.
\newblock {\em Statistica Neerlandica}, 64(3):329--351, 8 2010.

\bibitem{renyi}
A.~R\'{e}nyi.
\newblock On stable sequences of events.
\newblock {\em Sankhy\={a} Ser. A}, 25:293 302, 1963.

\bibitem{vostrikova}
P.~Salminen and L.~Vostrikova.
\newblock On exponential functionals of processes with independent increments.
\newblock {\em Theory of Probability \& Its Applications}, 63(2):267--291,
  2018.

\bibitem{sato}
K.-I. Sato.
\newblock {\em L\'{e}vy processes and infinitely divisible distributions},
  volume~68 of {\em Cambridge Studies in Advanced Mathematics}.
\newblock Cambridge University Press, Cambridge, 2013.
\newblock Translated from the 1990 Japanese original, Revised edition of the
  1999 English translation.

\bibitem{tukey}
J.~W. Tukey.
\newblock On the distribution of the fractional part of a statistical variable.
\newblock {\em Math. Sbornik}, 4(3):561--562, 1938.

\bibitem{whitt}
W.~Whitt.
\newblock {\em Stochastic-Process Limits: An Introduction to Stochastic-Process
  Limits and Their Application to Queues}.
\newblock Springer Series in Operations Research and Financial Engineering.
  Springer New York, 2002.

\end{thebibliography}

\end{document}